\newtheorem{theorem}{Theorem}
\newtheorem{corollary}{Corollary}
\newtheorem{proposition}{Proposition}
\newtheorem{lemma}{Lemma}
\newtheorem{hypothesis}{Hypothesis}
\theoremstyle{remark}
\newtheorem{remark}{Remark}
\newtheorem{example}{Example}
\newcommand\pp{\boldsymbol{\phi}}
\newcommand\vp{\boldsymbol{\varphi}}
\newcommand\uu{\boldsymbol{u}}
\newcommand\zz{\boldsymbol{z}}
\newcommand\Xe{X_\varepsilon}
\newcommand\cC{\mathcal{C}}
\newcommand\R{\mathbb{R}}
\newcommand\HH{\boldsymbol{H}}
\newcommand\EE{\boldsymbol{E}}
\newcommand\cE{\mathcal{E}}
\newcommand\cP{\mathcal{P}}
\newcommand\cM{\mathcal{M}}
\newcommand\cA{\mathcal{A}}
\newcommand\cH{\mathcal{H}}
\newcommand\cI{\mathcal{I}}
\newcommand\cJ{\mathcal{J}}
\newcommand\cK{\mathcal{K}}
\newcommand\cZ{\mathcal{Z}}
\newcommand\cY{\mathcal{Y}}
\newcommand\px{\partial_x}
\newcommand\ud{\, \textnormal{d}}
\newcommand\udd{\textnormal{d}}
\newcommand{\inprod}[2]{\left\langle{#1},{#2}\right\rangle}
\DeclareMathOperator\sgn{sgn}
\DeclareMathOperator\sech{sech}
\title[Odd kink dynamics with one internal mode]{Kink dynamics under odd perturbations 
for (1+1)-scalar field models with one internal mode}
\author{Micha{\l} Kowalczyk}
\address{Departamento de Ingenier\'{\i}a Matem\'atica and Centro
de Modelamiento Matem\'atico (UMI 2807 CNRS), Universidad de Chile, Casilla
170 Correo 3, Santiago, Chile.}
\email {kowalczy@dim.uchile.cl}
\author{Yvan Martel}
\address{CMLS, \'Ecole polytechnique, CNRS, Institut Polytechnique de Paris, 91128 Palaiseau Cedex, France}
\email{yvan.martel@polytechnique.edu}
\thanks{M. Kowalczyk was partially funded by Chilean research grants FONDECYT 1210405 and ANID projects ACE210010 and FB210005. Part of this work was done when he was visiting the CMLS at \'Ecole Polytechnique, France in the autumn of 2020. The authors thank Claudio Mu\~noz (Universidad de Chile) for useful discussions.}
\subjclass[2010]{35L71 (primary), 35B40, 37K40}
\begin{document}
\begin{abstract}
We consider odd symmetric (1+1)-scalar field models with one internal mode.
Under natural and robust assumptions, including the Fermi golden rule,
we prove the asymptotic stability of the kink by odd perturbations in the energy space. 
For example, the result applies to the $\phi^4$ model with odd symmetry and some of its perturbations.
\end{abstract}

\maketitle

\section{Introduction}

\subsection{Main result}
Consider the one-dimensional scalar field model
\begin{equation}\label{eq:phi}
\partial_{tt} \phi - \partial_{xx} \phi + W'(\phi) =0 ,\quad (t,x)\in \R\times \R.
\end{equation}
As a first order system for $\pp=(\phi,\phi_t)=(\phi_1,\phi_2)$, this problem reads
\begin{equation}
\label{eq:pp}
\begin{cases}
\dot \phi_1= \phi_2\\
\dot \phi_2= \partial_{xx} \phi_1 -W'(\phi_1).
\end{cases}
\end{equation}
The potential $W$ satisfies the following assumptions
\begin{equation}\label{on:W}
\begin{cases} 
\mbox{$W:\R\to [0,\infty)$ is of class $\cC^\infty$,}\\
\mbox{$W$ is even, $W>0$ on $(-1,1)$,}\\
\mbox{$W(\pm 1)=0,$ $W'(\pm 1)=0$, $W''(\pm 1)>0$.}
\end{cases}
\end{equation}
By change of unknown $\phi\to a \phi$, fixing the location of the zeros of $W$ at $\pm 1$ does not restrict the generality.
By change of variable $(x,t)\to (b x,b t)$, it would also be possible to fix $W''(\pm 1)$ to the value $1$, but for the sake of clarity, we simply denote
\[ W''(\pm 1)=\omega^2 \quad \mbox{for $\omega>0$.} \]
Under the above assumptions, equation~\eqref{eq:phi} admits a static solution $\HH=(H,0)$, called \emph{kink}, where the function $H$ is the unique, odd and smooth solution of the equation
\begin{equation*}
\begin{cases}
H'' = W'(H) \mbox{ on $\R$},\\
\lim_{\pm\infty} H = \pm 1.
\end{cases}
\end{equation*}
Note that it satisfies $H'>0$ on $\R$. Moreover, for any $k\geq 0$, there exists $C_k>0$ such that 
\begin{equation*}
|H(x)\mp 1|\leq C_0 e^{\mp \omega x},\quad |H^{(k)}(x)|\leq C_k e^{-\omega |x|} \quad \mbox{on $\R$}
\end{equation*}
(see \emph{e.g.} \cite[Lemma 1]{KMMV}).
Recall the conservation laws (energy and momentum) of the model
\begin{align*}
\cE[\pp] & = \frac 12 \int \left[ \phi_2^2 + (\partial_x \phi_1)^2 + 2 W(\phi_1)\right],\\
\cP[\pp] & = \int \phi_2 \partial_x \phi_1.
\end{align*}
The energy space is
\[
\EE=\left \{\pp\in L^1_{\rm loc}(\R)\times L^1_{\rm loc}(\R) :
\partial_x \phi_1 \in L^2(\R),\ \sqrt{W(\phi_1)}\in L^2(\R),\ \phi_2\in L^2(\R)\right\}.
\]
The space of odd functions in $\EE$ is invariant under the flow since $W'$ is odd.
To study the stability properties of $\HH$ under odd perturbations, we introduce the following subset $\EE^{odd}_{\HH}$ of $\EE$\[
\EE^{odd}_{\HH} = \left\{\pp\in L^1_{\rm loc}(\R)\times L^1_{\rm loc}(\R) :
\mbox{$\pp$ is odd and }
\partial_x \phi_1,\ \phi_1-H,\ \phi_2\in L^2(\R)\right\}.
\]
Recall from~\cite{MR678151} (see also \cite{KMMV,Lohe}) the stability of the kink by odd perturbations of the initial data in the energy space:
\emph{for any $\pp^{in}\in \EE^{odd}_{\HH}$ with
$\|\pp^{in}-\HH\|_{H^1\times L^2}$ small, 
the solution $\boldsymbol\phi$ of \eqref{eq:phi}
with $\pp(0)=\pp^{in}$ is global and, for a constant $C>0$,}
\begin{equation}\label{eq:stab}
\sup_{t\in\R}\|\pp(t)-\HH\|_{H^1\times L^2}
\leq C \|\pp^{in}-\HH\|_{H^1\times L^2}.
\end{equation}
In the framework of the above stability result, we decompose
\begin{equation}\label{eq:ppp}
\boldsymbol \phi=\HH+\vp
\end{equation}
and we write \eqref{eq:pp} in terms of $\vp=(\varphi_1,\varphi_2)$
\begin{equation}
\label{eq:varphi}
\begin{cases}
\dot \varphi_1= \varphi_2\\
\dot \varphi_2=- L_0 \varphi_1 - N 
\end{cases}
\end{equation}
where $ L_0$ is the linearized operator around the kink
\begin{equation*}
 L_0 = -\partial_{xx} + W''(H)
\end{equation*}
and $N$ gathers nonlinear terms
\begin{equation}\label{def:N}
N =W'(H+\varphi_1) - W'(H)- W''(H) \varphi_1.
\end{equation}

\smallskip

The present paper is devoted to the question of \emph{asymptotic stability}, which requires sharp information about the spectral properties of $L_0$ and possibly nonlinear conditions.
Recall that the operator $ L_0$ has the continuous spectrum $[\omega^2,+\infty)$ and a non empty discrete spectrum since by differentiating the equation of $H$, one finds $L_0 H'=0$.
In \cite[Theorem~2]{KMMV}, a sufficient condition on the potential $W$ was given to ensure asymptotic stability of the kink, without the oddness restriction. Indeed, introducing the transformed potential
\[
V = - W (\log W)''
\]
it is sufficient that $V'\not\equiv 0$ and $V$ is \emph{repulsive} to obtain the asymptotic stability of the kink in the energy space in some local sense. This condition implies that $ L_0$ has no eigenvalue other than $0$ and no resonance at $\omega^2$.
While several concrete applications of this criterion were given in \cite[Sect. 5]{KMMV},
some classical models from the Physics literature, like the $\phi^4$ model discussed later in Sect.~\ref{S:1.4},
do not enter this framework.
Our objective here is to deal with general models of the form \eqref{eq:phi}, admitting at least one \emph{internal mode}, \emph{i.e.} for which the operator $L_0$ has a non trivial spectrum.
This will extend, with a different proof, the result obtained in~\cite{KMM} for the $\phi^4$ model.

\begin{hypothesis}[Presence of an internal mode]
In addition to the eigenvalue~$0$ associated to the even eigenfunction $H'$, suppose that the operator $L_0$ has at least one eigenvalue in $(0,\omega^2)$, denoted by $\lambda^2$ with $\lambda\in (0,\omega)$, associated to an odd normalized eigenfunction~$Y$.
\end{hypothesis}
The existence of an additional eigenvalue for $ L_0$ is a serious difficulty for proving the asymptotic stability of the kink. Indeed, the function
\[\vp(t,x)=(\sin(\lambda t) Y(x), \lambda\cos(\lambda t) Y(x))\]
is then a solution of the linear counterpart of~\eqref{eq:varphi}
\begin{equation*}
\begin{cases}
\dot \varphi_1= \varphi_2\\
\dot \varphi_2=- L_0 \varphi_1.
\end{cases}
\end{equation*}
In the presence of such \emph{time periodic} linear solutions, the asymptotic stability property should rely on a \emph{nonlinear condition}, called the (nonlinear) \emph{Fermi golden rule}, following the terminology of \cite{SoWe2}.
We formulate this hypothesis for the internal mode $Y$.

\begin{hypothesis}[Fermi golden rule]
Suppose that there exists an odd, bounded function $g:\R\to\R$ of class $\cC^\infty$, solution of
\[L_0 g = 4\lambda^2 g\]
and satisfying
\begin{equation}
\label{fgr}
\int W'''(H) Y^2 g \neq 0.
\end{equation}
\end{hypothesis}

The presence of more than one internal mode would further complicate the analysis
(see references in Sect.~\ref{S:1.3}) and we do not investigate this issue here.
Thus, we will introduce one last hypothesis that constraints the number of odd internal modes. To formulate this spectral assumption, we introduce the iterated Darboux factorization associated to $L_0$. We use the convenient formulation in \cite[Proposition 1.9]{CuMa_1}, \cite[Sect. 3]{deift-trub} (see also \cite{CGNT,KMMV}).
Introducing the differential operators 
\begin{equation*}
U_0= H' \cdot \partial_x\cdot (H')^{-1} , \quad U_0^\star= -(H')^{-1} \cdot \partial_x\cdot H',
\end{equation*}
we observe that $L_0=U_0^\star U_0$ and we set
\begin{equation*}
L_1=U_0 U_0^\star = -\partial_{xx} +  2 \left(\frac{H''}{H'}\right)^2 - \frac {H'''}{H'} 
=-\partial_{xx} + P_1.
\end{equation*}
By \cite{CuMa_1,deift-trub}, 
since $L_0$ has the second eigenvalue $\lambda^2\in (0,\omega^2)$, the operator $L_1$ has the principal eigenvalue $\lambda^2$, associated to the even eigenfunction $Z=U_0Y$, $Z>0$ on $\R$.
Set
\[
U_1= Z \cdot \partial_x\cdot Z^{-1}, \quad U_1^\star=-Z^{-1}\cdot\partial_x\cdot Z.
\]
We observe that $L_1 = \lambda^2 + U_1^\star U_1$
and we set 
\[
L_{2}=\lambda^2 + U_1U_1^\star 
=-\partial_{xx} +  \lambda^2 + 2\frac{(Z')^2}{Z^2} - \frac{Z''}{Z} = -\partial_{xx}+ P_2.
\]
Note that $P_1$ and $P_2$ are smooth, even, bounded functions with $\lim_{\pm \infty}P_1=\lim_{\pm \infty}P_2=\omega^2$.
With such notation, we observe the following crucial \emph{conjugation identity}
\begin{equation}\label{eq:id}
U_1 U_0 L_0 = L_2 U_1 U_0.
\end{equation}
Concerning the potential $P_2$, we will need the following assumption.

\begin{hypothesis}[Spectral assumption]
Suppose that there exists a constant $\gamma>0$ such that the operator 
$-(1-\gamma) \partial_{xx} +\frac 12xP_2'$ has at most one negative eigenvalue.
\end{hypothesis}

This hypothesis means that the operator $-(1-\gamma) \partial_{xx} +\frac 12xP_2'$
has no negative eigenvalue associated to an odd eigenfunction.
This is the analogue of a repulsivity condition for the potential $P_2$, restricted to odd functions.
In particular, for the operator $L_0$, it leaves the possibility of existence of a third eigenvalue, associated to an even eigenfunction, but not of an odd eigenfunction other than $Y$.

\smallskip

The main result of this article is the asymptotic stability of the kink of~\eqref{eq:phi} 
with respect to odd perturbations in the energy space under Hypothesis~1,~2 and~3.

\begin{theorem}\label{TH:1}
Assume \eqref{on:W} and Hypothesis~1,~2 and~3.
There exists $\delta>0$ such that for any 
$\pp^{in}\in \EE^{odd}_{\HH}$ with
$\|\pp^{in}-\HH\|_{H^1\times L^2} \leq \delta$,
the global solution $\boldsymbol\phi$ of \eqref{eq:phi}
with $\pp(0)=\pp^{in}$ satisfies, for any bounded interval $I$ of $\R$,
\begin{equation*}
\lim_{t\to \pm \infty} \|\pp(t)-\HH\|_{H^1(I)\times L^2(I)}=0.
\end{equation*}
\end{theorem}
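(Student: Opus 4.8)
The plan is to follow the standard architecture for asymptotic stability of solitons in the presence of an internal mode: modulation to account for the zero eigenvalue, a normal-form / refined-energy argument to extract damping of the internal mode via the Fermi golden rule, and a virial-type (Martel--Merle) argument to control the radiation in the continuous spectrum. The oddness hypothesis is what makes this tractable: it kills the translation mode dynamically (odd perturbations cannot produce net translation), so in fact no modulation parameter is needed, and it restricts the spectrum of the conjugated operator $L_2$ to odd functions, which is precisely what Hypothesis~3 exploits.

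First I would set up the Darboux-conjugated variables. Given the decomposition $\boldsymbol\phi=\HH+\vp$ with $\vp$ solving \eqref{eq:varphi}, apply the intertwining operator $U_1U_0$ from \eqref{eq:id} to pass to a variable $\uu=(u_1,u_2)$ with $u_1 = U_1U_0\varphi_1$, governed by an equation whose linear part is $L_2=-\partial_{xx}+P_2$, a Schr\"odinger operator with no eigenvalue on odd functions (this is the content of Hypothesis~3 once one recognizes $-\partial_{xx}+\tfrac12 xP_2'$ as the relevant virial operator for $L_2$). Simultaneously I would decompose $\varphi_1$ along the internal mode: write $\varphi_1 = a(t) Y + (\text{transverse part})$, extract the ODE system for $a$ and $\dot a$, and observe that at leading order $(a,\dot a)$ oscillates at frequency $\lambda$ with the quadratic source term $W'''(H)Y^2$ feeding the $2\lambda$ frequency, which resonates with the continuous spectrum only at $4\lambda^2<\ldots$ — here is where the auxiliary function $g$ with $L_0g=4\lambda^2 g$ and the nonvanishing condition \eqref{fgr} enter. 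A normal-form change of variables removing the non-resonant quadratic terms, followed by computing the effective damping coefficient, should yield a Lyapunov-type estimate giving $\int \lambda^4 |a|^4 \, dt \lesssim \|\pp^{in}-\HH\|^2$ (quartic decay, as is typical), hence $a(t)\to 0$.

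Next I would run the virial argument on the transverse/radiation part in the conjugated variables $\uu$. The point of conjugating to $L_2$ is that $L_2$ has no odd eigenfunction, so a virial functional of the form $\int \chi_A (x u_1 \partial_x u_1 + \ldots)$ — built with a truncated weight at scale $A$ — has a coercive time-derivative modulo the internal-mode contributions, precisely when the operator $-(1-\gamma)\partial_{xx}+\tfrac12 x P_2'$ is nonnegative on odd functions. Integrating in time gives $\int \int e^{-|x|/A}(|\partial_x u_1|^2 + |u_2|^2) \lesssim \|\pp^{in}-\HH\|^2 + (\text{internal mode errors})$, and the internal-mode errors are absorbed using the quartic bound from the previous step (this is the standard coupling: the virial controls radiation up to terms in $a$, the Fermi golden rule controls $a$ up to terms in radiation, and a bootstrap closes the loop for $\delta$ small). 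Undoing the Darboux transformation — which is bounded and, crucially, invertible on the relevant weighted spaces up to lower-order localized terms since $U_1U_0$ loses no derivatives in the right direction — transfers this decay back to $\vp$ and yields $\|\vp(t)\|_{H^1(I)\times L^2(I)}\to 0$.

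The main obstacle, as always in these arguments, will be making the virial estimate and the Fermi-golden-rule estimate compatible: the virial functional for $L_2$ is only almost coercive (it controls $\partial_x u_1$ and $u_2$ in a local norm but not $u_1$ itself without the spectral gap), and the transfer between the $\vp$ variables, the $\uu$ variables, and the modal amplitude $a$ generates numerous localized error terms that must each be shown to be either quartic in the solution size or absorbable by a small constant times the coercive part. In particular one must handle the fact that $Y$ and $Z=U_0Y$ are not in $L^2$-weighted spaces with arbitrarily growing weights, so the weight $A$ in the virial must be tuned (log-growth or a second, larger cutoff scale $B\gg A$, as in \cite{KMMV,KMM}) to balance the error from the slowly-decaying tails of the internal mode against the coercivity. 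Verifying that Hypothesis~3 indeed gives the needed sign on $-(1-\gamma)\partial_{xx}+\tfrac12 x P_2'$ after all truncations — i.e.\ that the loss $\gamma$ in the Laplacian coefficient is exactly what the truncation of the weight costs — is the delicate quantitative heart of the proof.
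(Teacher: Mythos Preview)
Your overall plan has the right ingredients---modal decomposition along $Y$, Fermi golden rule for the internal mode amplitude, Darboux conjugation to $L_2$, and a virial argument exploiting Hypothesis~3---but the architecture is off in a way that would prevent the estimates from closing.

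The paper does not apply $U_1U_0$ first and then run a single virial. It runs \emph{two} virials. First, a virial at large scale $A$ directly on the orthogonal component $\uu$ (with $\langle u_j,Y\rangle=0$) for the \emph{original} operator $L_0$; no spectral information is used here, and the output is control of $\|\sigma_A\partial_x u_1\|^2 + A^{-2}\|\sigma_A u_1\|^2 + A^{-2}\|\sigma_A u_2\|^2$ by the local quantity $\|\rho^2 u_1\|^2 + |\zz|^4$. Only after this does one apply the (regularized) Darboux operator $S_\varepsilon = (1-\varepsilon\partial_{xx})^{-1}U_1U_0$ to $\uu$ and run a \emph{second} virial, now at a small fixed scale $B\ll A$, for the conjugated operator $L_2$; Hypothesis~3 gives coercivity of this second virial on odd functions. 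The reason the two-scale structure is essential is that the nonlinear error terms in the second virial---coming from $S_\varepsilon N^\perp$ and the commutator $[X_\varepsilon,P_2]$---are naturally estimated in $\sigma_A$-weighted norms of the \emph{original} variable $u_1$, not in norms of the transformed variable $v_1=S_\varepsilon u_1$. It is the first virial that feeds those norms back. A single virial on the conjugated problem, as you propose, would leave these errors uncontrolled. Incidentally, your scale ordering is reversed: the paper takes $1\ll B\ll A$, not $B\gg A$.

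Two further points. First, you omit the regularization $X_\varepsilon$; since $U_1U_0$ is second order, it does not act boundedly on $L^2$, and the smoothing is needed both to define the transformed variable in the energy space and to make the commutator error $[X_\varepsilon,P_2]$ small (of order $\varepsilon^{1/2}$). Second, ``undoing the Darboux transformation'' is not free: one needs a coercivity lemma of the form $\|\rho^2 u\|\lesssim\|\rho S_\varepsilon u\|$ for odd $u$ orthogonal to $Y$, proved by explicit double integration of the ODE $U_1U_0 u = (1-\varepsilon\partial_{xx})v$.

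On the Fermi golden rule step: rather than a normal-form change of variables, the paper introduces a concrete functional $\mathcal{J}=-\alpha\int u_2 g\chi_A + 2\lambda\beta\int u_1 g\chi_A + \frac{\Gamma}{2\lambda}\beta|\zz|^2$ (with $\alpha=z_1^2-z_2^2$, $\beta=2z_1z_2$) whose time derivative is $\Gamma\alpha^2$ up to errors controlled by the first virial; combined with an auxiliary $\cZ=\frac{\Gamma}{4\lambda}\alpha\beta$ this yields $\int|\zz|^4\lesssim A\delta^2 + A^{-1/2}\int\|\rho^2 u_1\|^2$. This is the paper's main new device, and is what makes the argument work under the natural condition \eqref{fgr} rather than a numerically perturbed version of it.
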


We also check that our set of hypothesis is robust by perturbation of the model.
Let $\eta_0>0$ be a small parameter to be fixed later.
We consider 
\begin{equation}\label{def:Weta}
W_\eta = (1+\eta) W
\end{equation}
where the function $\eta$ satisfies
\begin{equation}\label{on:eta}
\begin{cases} 
\mbox{$\eta:\R\to \R$ is of class $\cC^\infty$,}\\
\mbox{$\eta$ is even,}\\
\mbox{$\sup_{[-1,1]}|\eta^{(k)}| \leq \eta_0$ for $k\in \{0,\ldots,4\}$.}
\end{cases}
\end{equation}
We refer to \cite[Remark 4.3]{KMMV} for a justification of the choice of a multiplicative perturbation of the potential.

\begin{corollary}\label{TH:2}
Assume \eqref{on:W} and Hypothesis~1,~2 and~3.
There exists $\eta_0>0$ such that for any function $\eta$ satisfying 
\eqref{on:eta}, the potential $W_\eta$ defined in~\eqref{def:Weta} also satisfies \eqref{on:W} and Hypothesis~1,~2 and~3. In particular, the corresponding kink is asymptotically stable by odd perturbations in the energy space.
\end{corollary}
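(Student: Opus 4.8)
The plan is to check that each of \eqref{on:W} and Hypotheses~1, 2 and~3 survives the replacement $W\rightsquigarrow W_\eta=(1+\eta)W$ when $\eta_0$ is small; the asymptotic stability assertion is then Theorem~\ref{TH:1} applied to $W_\eta$. Property \eqref{on:W} for $W_\eta$ is immediate: $W_\eta$ is smooth, even, positive on $(-1,1)$ (as $\eta_0<1$), while $W(\pm1)=W'(\pm1)=0$ gives $W_\eta(\pm1)=W_\eta'(\pm1)=0$ and $W_\eta''(\pm1)=(1+\eta(\pm1))\omega^2=:\omega_\eta^2>0$ with $|\omega_\eta-\omega|\lesssim\eta_0$. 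The kink $H_\eta$ of $W_\eta$, i.e.\ the odd solution of $H_\eta''=W_\eta'(H_\eta)$ with $\lim_{\pm\infty}H_\eta=\pm1$, exists and is close to $H$: since $L_0$ has no odd zero mode or zero resonance (the odd solutions of $L_0u=0$ grow at $\pm\infty$), it is invertible on odd exponentially decaying functions, and the implicit function theorem in an exponentially weighted space — exactly as in \cite{KMMV} — yields $\|(H_\eta-H)^{(k)}e^{c|x|}\|_{L^\infty}\lesssim\eta_0$ for each $k\ge0$ and a fixed $c\in(0,\omega)$, with decay bounds on $H_\eta$ uniform in $\eta$. Hence $L_0^\eta:=-\partial_{xx}+W_\eta''(H_\eta)$ differs from $L_0$ only by multiplication by the $L^\infty$ function $W_\eta''(H_\eta)-W''(H)=O(\eta_0)$, so $\|L_0^\eta-L_0\|_{\mathcal L(L^2)}\lesssim\eta_0$, with essential spectrum $[\omega_\eta^2,\infty)$ converging to $[\omega^2,\infty)$.

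For Hypothesis~1, differentiating the equation of $H_\eta$ gives $L_0^\eta H_\eta'=0$ with $H_\eta'>0$ even, while the simple isolated eigenvalue $\lambda^2\in(0,\omega^2)$ of $L_0$ persists, by the operator norm estimate above, as a simple eigenvalue $\lambda_\eta^2\in(0,\omega_\eta^2)$ of $L_0^\eta$ with $|\lambda_\eta-\lambda|\lesssim\eta_0$ and eigenfunction $Y_\eta\to Y$ in $H^2$ with decay uniform in $\eta$; since $W_\eta''(H_\eta)$ is even, $Y_\eta$ has a definite parity, hence is odd, being close to the odd function $Y$. For Hypothesis~2, note that Hypotheses~1 and~3 force $4\lambda^2\ge\omega^2$ — a bounded odd solution of $L_0g=4\lambda^2g$ with $4\lambda^2<\omega^2$ would be an odd eigenfunction of $L_0$ distinct from $Y$ — and in the generic case $4\lambda^2>\omega^2$ one has $4\lambda_\eta^2>\omega_\eta^2$ for $\eta_0$ small. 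Let $g_\eta$ be the odd solution of $L_0^\eta g_\eta=4\lambda_\eta^2g_\eta$ with $g_\eta(0)=0$, $g_\eta'(0)=1$; it is smooth, odd by symmetry of the equation, and bounded since the potential of $L_0^\eta$ tends to $\omega_\eta^2<4\lambda_\eta^2$ at $\pm\infty$. Continuous dependence of ODE solutions on their coefficients over compact intervals, together with the uniform exponential control of $W_\eta''(H_\eta)-\omega_\eta^2$, gives $g_\eta\to g$ locally uniformly with a uniform $L^\infty$ bound; since $W_\eta'''(H_\eta)Y_\eta^2$ is bounded by $Ce^{-c|x|}$ uniformly in $\eta$, dominated convergence yields $\int W_\eta'''(H_\eta)Y_\eta^2g_\eta\to\int W'''(H)Y^2g\neq0$, so \eqref{fgr} holds for $W_\eta$ when $\eta_0$ is small.

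Hypothesis~3 is the heart of the matter. One runs the iterated Darboux factorization for $L_0^\eta$ — $U_0^\eta=H_\eta'\cdot\partial_x\cdot(H_\eta')^{-1}$, then $L_1^\eta=-\partial_{xx}+P_1^\eta$ with principal eigenvalue $\lambda_\eta^2$ and positive even eigenfunction $Z_\eta=U_0^\eta Y_\eta$, then $U_1^\eta=Z_\eta\cdot\partial_x\cdot Z_\eta^{-1}$ and $L_2^\eta=-\partial_{xx}+P_2^\eta$ — and, propagating the weighted $O(\eta_0)$ estimates on $H_\eta$ (with uniform-in-$\eta$ two-sided exponential bounds on $H_\eta'$ and $Z_\eta$) through the explicit formulas for $P_1^\eta,P_2^\eta$, obtains $\|(P_2^\eta-P_2)e^{c'|x|}\|_{L^\infty}+\|(P_2^{\eta\prime}-P_2')e^{c'|x|}\|_{L^\infty}\lesssim\eta_0$ for a fixed $c'\in(0,\omega)$, with $\lim_{\pm\infty}P_2^\eta=\omega_\eta^2$. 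Now fix $\gamma_0$ as in Hypothesis~3 for $W$, pick $\gamma_*\in(0,\gamma_0)$, and consider $A_\eta:=-(1-\gamma_*)\partial_{xx}+\tfrac12 xP_2^{\eta\prime}$; as $P_2^\eta$ is even, $A_\eta$ splits over odd and even functions. On odd $u$, the Hardy-type bound $\int_0^\infty e^{-c'x}|u|^2\le C\|u'\|_{L^2}^2$ (from $|u(x)|^2\le x\int_0^x|u'|^2$) makes the perturbation $\tfrac12\int x(P_2^{\eta\prime}-P_2')|u|^2$ at most $C\eta_0\|u'\|_{L^2}^2$, so for $C\eta_0\le\gamma_0-\gamma_*$ one obtains, on odd $u$,
\[
\langle A_\eta u,u\rangle\ \ge\ (1-\gamma_0)\|u'\|_{L^2}^2+\tfrac12\int xP_2'\,|u|^2\ \ge\ 0,
\]
the last inequality being Hypothesis~3 for $W$ restricted to odd functions; thus $A_\eta$ has no negative eigenvalue with odd eigenfunction, which is the content of Hypothesis~3 used in Theorem~\ref{TH:1} (cf.\ the remark following Hypothesis~3). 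On even functions, $A_\eta$ is an $O(\eta_0)$ bounded, exponentially localized perturbation of $-(1-\gamma_*)\partial_{xx}+\tfrac12 xP_2'$, which is bounded below in the form sense (since $\gamma_*<\gamma_0$) by the operator of Hypothesis~3 and so has at most one negative eigenvalue; by spectral continuity of this finite negative spectrum and the one-dimensional fact that a small exponentially localized perturbation adds at most one bound state below the essential spectrum, the even count is likewise at most one for $\eta_0$ small. Hence Hypothesis~3 holds for $W_\eta$ with $\gamma_\eta=\gamma_*$.

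I expect the main obstacle to be precisely this last step: the spectral assumption of Hypothesis~3 is not an open condition a priori, and its stability hinges on the two structural features exploited above — the truncation $\gamma$ of the kinetic energy, which lets the perturbation of $xP_2'$ be absorbed in the odd sector via the Hardy inequality, and the exponential localization of $xP_2'$, which caps the bifurcation of negative eigenvalues from the threshold in the even sector. The accompanying estimate $\|(P_2^\eta-P_2)e^{c'|x|}\|_{L^\infty}\lesssim\eta_0$, which requires pushing the weighted $O(\eta_0)$ bounds on $H$ through two Darboux steps, is the most computational ingredient; the verifications of Hypotheses~1 and~2 are, by contrast, routine perturbation theory.
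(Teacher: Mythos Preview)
Your proof is correct and follows essentially the same route as the paper's: verify \eqref{on:W} directly, use \cite{KMMV} for the weighted closeness of $H_\eta$ to $H$, persist $(\lambda_\eta,Y_\eta)$ by standard perturbation theory, compare $g_\eta$ to $g$ by ODE continuous dependence with the same normalization $g_\eta(0)=0$, $g_\eta'(0)=1$, and absorb $\tfrac12 x(P_{2,\eta}'-P_2')$ into the kinetic slack on odd functions via a Hardy-type bound (the paper invokes the P\"oschl--Teller spectral fact from the proof of Lemma~\ref{LE:3} for the same purpose). Your even-sector discussion is superfluous and can be dropped: since $xP_{2,\eta}'$ is even, ``at most one negative eigenvalue'' is equivalent to nonnegativity on odd functions, and the paper simply declares Hypothesis~3 verified once the odd inequality holds with $\gamma/2$ in place of $\gamma$.
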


\begin{remark}\label{RK:1}
As noticed in \cite[Remark (1), p.14]{SoWe2}, Hypothesis~2 and~3 imply that
\[
\frac {\omega}2 < \lambda < \omega.
\]
Indeed, assuming the existence of a non zero, bounded and odd function $g$ such that
$L_0 g = 4\lambda^2 g$ with $4\lambda^2\leq \omega^2$, the function $g_2 = U_1 U_0 g$ satisfies
$g_2\not \equiv 0$ and 
$L_2 g_2 = 4 \lambda^2 g_2$, which contradicts Hypothesis~3 by a virial argument
(see \emph{e.g.} \cite[Appendix]{GP} for a similar argument).
The fact that $g_2\not \equiv0$ follows from the equivalence
($c_1$ and $c_2$ are constants)
\begin{equation}\label{eq:equiv}
U_1U_0 g =0\iff U_0g=c_1 Z\iff g = c_2Y.
\end{equation}
\end{remark}

\subsection{References}\label{S:1.3}

The question of asymptotic stability of kinks for scalar field models is closely related to the one of global existence and asymptotic behavior of small solutions to nonlinear Klein-Gordon equations. These questions have been studied by many different techniques and authors, both in the case of constant and variable coefficients. We refer to the pioneering works \cite{Delort,Delort_fourier,K1,K2,Shatah1}. For early asymptotic stability results for nonlinear Schr\"odinger models, see~\cite{bus_per1,bus_per2}.

The asymptotic stability of kinks for one-dimensional scalar field models without internal mode has been studied in different settings and by different methods in~\cite{KK2,KMMV}.
A related question is the (conditional) asymptotic stability of solitons for the focusing nonlinear Klein-Gordon equation~\cite{BCS,KMM4,KNS}. 
Several results on the \emph{full asymptotic stability question} (among other estimates, convergence to $0$ of the solution is obtained in the supremum norm on $\R$, with an explicit decay rate) for one-dimensional nonlinear Klein-Gordon models were obtained recently in various relevant cases in~\cite{GP,LS1,LS2,LS3,Ste}.

For wave type equations, the case where the underlying linear operator admits an internal mode was first considered in the seminal paper \cite{SoWe2}, in the three dimensional setting and for cubic nonlinearities. This was recently extended to quadratic nonlinearities in \cite{LP}.
We also refer to \cite{MR2373326,DM,KK1,KMM} for results concerning the $\phi^4$ model or  higher dimensional related models.
For the integrable sine-Gordon equation, we refer for instance to~\cite{AMP2,denzler,Gong,LuS}.
See~\cite{JKL} for the construction and classification of kink-antikink pairs for the $\phi^4$ model.
Recall that general Klein-Gordon models with an arbitrarily large number of internal modes were considered in \cite{Bam_Cucc}. For similar questions in the case of nonlinear Schr\"odinger models, we refer to~\cite{CuMa_1}. See also the reviews \cite{CuMa_review,KMM3} for more references.
When a resonance exists, rather than an internal model, the situation is much more delicate. Ways to deal with such situation with some generality were recently found in~\cite{LLSS,LLS,LLS2}.
Last, we refer to~\cite{BDKK} for cases where the Fermi golden rule does not hold.

\subsection{Applications}\label{S:1.4}

\begin{example}[The $\phi^4$ model]
For the potential
\[
W(\phi)=\frac{1}{4}(\phi^2-1)^2,
\]
the model \eqref{eq:phi} is known as the $\phi^4$ equation; see
the general references~\cite{Kevrekidisbook,kruskal_segur,MaSutbook}.
We recall that the asymptotic stability of the kink $H(x)=\tanh(x / \sqrt{2})$ under odd perturbations  was first proved in \cite{KMM}.
We check that Hypothesis 1, 2 and 3 hold for this model, so that Theorem~\ref{TH:1} provides another proof for this model.
The operator $L_0$ is 
\[
L_0=-\partial_{xx}+2-3\sech\left(\frac{x}{\sqrt{2}}\right).
\]
It is well-known that the discrete spectrum of $L_0$ consists of simple eigenvalues $0$ and $\lambda^2=\frac{3}{2}$,
with normalized eigenfunctions given by
\begin{equation*}
H'(x)= \frac 1 {\sqrt{2}} \sech^2\Big(\frac{x}{\sqrt{2}}\Big),
\quad 
Y(x)= c\tanh\left(\frac{x}{\sqrt{2}}\right) \sech\left(\frac{x}{\sqrt{2}}\right).
\end{equation*}
Thus, Hypothesis 1 is satisfied with $\lambda^2 \in (\frac 14{\omega^2},\omega^2)$, where $\omega^2=2$.
Moreover, Hypothesis~2 is satisfied for the $\phi^4$ model. Indeed, 
it is known by~\cite{Segur} that the function $g$ defined by
\[
g(x)=\sin(2x)\left(1+\frac 12 \sech^2\left(\frac{x}{\sqrt{2}}\right)\right)+\sqrt{2}\cos(2x)\tanh\left(\frac{x}{\sqrt{2}}\right)
\]
is a solution of $L_0 g = 6g$. 
It was checked by an explicit calculation in~\cite[Appendix~14]{DM} that \eqref{fgr} is satisfied for $g$.
Last, in the case of the $\phi^4$ model, it is straightforward to check that $P_2\equiv 2$ after the Darboux factorization. Therefore, Hypothesis~3 holds in this case.
Compared to the one in \cite{KMM}, the present proof is more general and allows to get rid of numerical computations of some constants.
\end{example}

\begin{example}[The $\phi^8$ model in the $\phi^4$ limit]
We consider the $\phi^8$ model (see for example \cite{Gani2,Khare,Lohe}), \emph{i.e.} equation~\eqref{eq:phi} with the potential
\[
W(\phi)=\frac{1}{4}(\phi^2-1)^2(\phi^2-m^2)^2,
\]
where $m>1$ is a parameter. Changing the time and space variables
$(t,x)\mapsto (m^2t,m^2x)$, we obtain the scaled potential
\[
W_m(\phi)=\frac 1{m^4} \frac{1}{4}(\phi^2-1)^2(\phi^2-m^2)^2
=\frac{1}{4}(\phi^2-1)^2\left[ 1- \frac{\phi^2}{m^2} \left(2-\frac{\phi^2}{m^2}\right)\right]
\]
which can be seen as a perturbation of the $\phi^4$ potential for $m$ large.
Applying Corollary~\ref{TH:2}, we see that the odd kink of the $\phi^8$ model is asymptotically stable
for $m$ large enough.
\end{example}

\subsection{Outline of proof}
We work in the framework of the stability property:
for $\delta>0$ small enough to be chosen, 
let $\pp^{in}\in \EE^{odd}_{\HH}$ with
$\|\pp^{in}-\HH\|_{H^1\times L^2} \leq \delta$ and 
let $\boldsymbol\phi$ be the global solution of \eqref{eq:phi}
with $\pp(0)=\pp^{in}$.
The proof of Theorem~\ref{TH:1} starts by decomposing $\pp$ as in \eqref{eq:ppp}, so that 
$(\varphi_1,\varphi_2)$ is a global solution of \eqref{eq:varphi} in $H^1\times L^2$.
By the stability result \eqref{eq:stab}, we have $\sup_{\R} \{\|\varphi_1\|_{H^1} + \|\varphi_2\|\}_{L^2}\leq C  \delta$.
We further decompose $(\varphi_1,\varphi_2)$ using the eigenfunction $Y$ from Hypothesis 1, 
setting
\begin{equation*}
\varphi_1= u_1+z_1 Y, \quad
\varphi_2=u_2+ \lambda z_2 Y, 
\end{equation*}
where the time-dependent function $\zz=(z_1,z_2)$ is chosen so that, for all $t\in \R$,
\[
\langle u_1, Y\rangle =\langle u_2, Y\rangle=0.
\]
Moreover, denoting $|\zz|=\sqrt{z_1^2+z_2^2}$, we have
\begin{equation}\label{eq:unif}
\sup_{\R} \left\{\|u_1\|_{H^1} + \|u_2\|_{L^2} + |\zz| \right\}\leq C \delta.
\end{equation}
By \eqref{eq:varphi} and the normalization $\langle Y,Y\rangle=1$, the following system holds
\begin{equation}
\label{eq:zu}
\begin{cases}
\dot z_1=\lambda z_2\\
\dot z_2=-\lambda z_1 - \frac 1\lambda\langle N, Y\rangle \\
\dot u_1= u_2\\
\dot u_2=-L_0 u_1 - (N-\langle N, Y\rangle).
\end{cases}
\end{equation}
The proof of Theorem~\ref{TH:1} involves localized virial arguments for Klein-Gordon equations
inspired from \cite{KMM,KMM4,KMMV}
(see also \cite{MMjmpa} and \cite{MR} for the introduction of virial estimates in similar contexts for the generalized Korteweg-de Vries equations (gKdV) and the nonlinear Schr\"odinger equations (NLS)), combined with Darboux factorization (see \cite{deift-trub,KMM4,KMMV,KNS,RR} for wave-type equations and \cite{CuMa_1,Mnls} for NLS type equations).

\smallskip

The plan of the article is the following. In Sect.~2, a first virial estimate on the function $\uu$, solution of \eqref{eq:zu}, provides a preliminary estimate of $\uu$ at any large scale, with a decoupling of the internal mode component 
$\zz$ at the linear order. In Sect.~3, we control the internal mode component $\zz$
using Hypothesis 2. This part contains the main new ingredient of this paper, which is the introduction of the functional $\mathcal J$ in the proof of Proposition~\ref{PR:2}.
In Sect.~4, we recall the properties of a regularized version of the operator $U_1U_0$, used to define a \emph{transformed problem} related to the algebraic observation~\eqref{eq:id}. In Sect.~5, we prove estimates on the transformed problem using a second virial argument based on Hypothesis~3. Last, in Sect.~ 6, we close the estimates.
This strategy based on two virial arguments was first introduced in \cite{KMM4}. Here, it is combined with a treatment of the internal mode inspired by \cite{KMM} (see also Remark~\ref{RK:2}).
Corollary \ref{TH:2} is proved in Sect.~7.

\smallskip

We remark that the odd symmetry is supposed for simplicity. Following \cite{KMMV}, assuming Hypothesis 1, it is also possible to formulate sufficient spectral conditions for asymptotic stability of the kink for general initial data.
However, this would not include the case of the $\phi^4$ equation without symmetry, because of the presence of an even resonance for this model.

\subsection*{Notation}
In this paper, the notation $F \lesssim G$ means that $F\leq CG$ for a constant $C>0$ independent of $F$ and $G$. 
Unless otherwise indicated, the implicit constants $C>0$ are supposed to be independent of the parameters $A,B,\varepsilon$ and $\delta$ introduced below.

We will use the following notation for
real-valued functions $u$, $v\in L^2(\R)$,
\[
\inprod{u}{v}=\int_\R u(x) v(x) \ud x,\quad
\|u\|=\sqrt{\inprod{u}{u}}.
\]
We fix the constant $\kappa>0$ by
\begin{equation*}
\kappa=\frac{1}{12}\sqrt{\omega^2-\lambda^2}
\end{equation*}
and we set
\begin{equation*}
\rho(x)=\sech^2(\kappa x).
\end{equation*}
We also define the space $\cY$ of $C^\infty$ functions $f:\R\to\R$ that satisfy: for any $k\geq 0$, 
there exists $C_k>0$ such that on $\R$,
\begin{equation*}
|f^{(k)}|\leq C_k \rho^{5}.
\end{equation*}
Note for example that $H'$, $Y$ and $Z$ belong to $\cY$.

For any function $F$, we denote (recall the normalization $\|Y\|=1$)
\[
F^\perp=F- {\langle F,Y\rangle} Y,\quad \langle F^\perp,Y\rangle =0.
\]

\subsubsection*{Regularization of the Darboux factorization}

Define the operator $\Xe \colon L^2(\R)\to H^2(\R)$, $\Xe = (1-\varepsilon \partial_{xx})^{-1}$ via its Fourier transform representation, for $h\in L^2$,
\[
\widehat {\Xe h}(\xi)=\frac{\hat h(\xi)}{1+\varepsilon \xi^2}.
\]
We define the bounded operator on $L^2$
\[
S_\varepsilon=\Xe U_1 U_0.
\]

\subsubsection*{Notation for virial arguments}
We consider a smooth even function $\chi:\R\to \R$ satisfying
\begin{equation*}
\mbox{$\chi=1$ on $[-1,1]$, $\chi=0$ on $(-\infty,-2]\cup[2,+\infty)$,
$\chi'\leq 0$ on $[0,+\infty)$.}
\end{equation*}
The constants $A$ and $B$ will be fixed later such that
$1\ll B \ll A$.
We define the following functions on $\R$
\begin{equation*}
\zeta_A(x)=\exp\left(-\frac1A(1-\chi(x))|x|\right),\quad
\Phi_A(x)=\int_0^x \zeta_A^2(y) \ud y
\end{equation*}
and
\begin{align}
&\zeta_B(x)=\exp\left(-\frac1B(1-\chi(x))|x|\right),\quad
\Phi_B(x)=\int_{0}^x \zeta_B^2(y) \ud y,\nonumber\\
&\Psi_{A,B}(x)=\chi_A^2(x)\Phi_B(x)\quad\mbox{where}\quad
\chi_A(x)=\chi\left(\frac{x}{A}\right).\label{def:chiB}
\end{align}
We also set
\[
\sigma_A(x) = \sech\left( \frac 2A x\right).
\]

\subsubsection*{General virial computation}

Let $\Phi:\R\to\R$ be a smooth, odd, strictly increasing, bounded function and let $\boldsymbol a=(a_1, a_2)$ be an $H^1\times L^2$ solution of 
\[
\begin{cases}
\dot a_1= a_2 \\
\dot a_2=-L a_1+G
\end{cases}
\] 
where $L=-\partial_{xx}+ P
$
and $G=G(t,x)$, $P=P(x)$ are given functions.
Define
\begin{equation*}
\cA=\int\left(\Phi \partial_x a_1+\frac{1}{2}\Phi' a_1\right)a_2
\end{equation*}
We compute formally
\begin{equation}\label{eq:vir0}
\dot \cA=-\int\Phi' (\partial_x a_1)^2+ \frac{1}{4}\int \Phi''' a_1^2 +\frac{1}{2} \int \Phi P'a_1^2
+\int G\left(\Phi \px a_1+\frac{1}{2}\Phi' a_1\right).
\end{equation}
Changing variables
\[
\tilde a_1=\zeta a_1, \qquad \zeta=\sqrt{\Phi'},
\]
we recall the alternative identity
\begin{equation}
\label{eq:vir1}
\begin{aligned}
\dot \cA &=-\int (\partial_x \tilde a_1)^2-\frac{1}{2} \int \left(\frac{\zeta''}{\zeta} - \frac{(\zeta')^2}{\zeta^2}\right) \tilde a_1^2
+\frac 12 \int \frac{\Phi P'}{\zeta^2} \tilde a_1^2
+\int G\left(\Phi \px a_1+\frac{1}{2}\Phi' a_1\right).
\end{aligned}
\end{equation}
We refer to \cite[Proof of Proposition 1]{KMM4} for detailed computations.

\section{First estimate at large scale}
The first key estimate of the proof of Theorem~\ref{TH:1}, given in the next proposition, controls (in time average) norms of $(u_1,u_2)$ at any large scale $A$ in terms of $|\zz|^2$ and a local norm of $u_1$.
The proof relies on a virial estimate for the system~\eqref{eq:zu}, without any spectral information.

\begin{proposition}\label{PR:1}
For any $A>0$ large, any $\delta>0$ small (depending on $A$) and any $T>0$, it holds
\[
\int_0^T \left( \|\sigma_A \partial_x u_1\|^2 + \frac 1{A^2} \|\sigma_A u_1\|^2 
+ \frac 1{A^2} \|\sigma_A u_2\|^2 \right) \udd t
\lesssim A \delta^2 + \int_0^T \left(\|\rho^2 u_1 \|^2+ |\zz|^4 \right) \udd t .
\]
\end{proposition}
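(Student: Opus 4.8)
The plan is to apply the general virial computation \eqref{eq:vir1} to the pair $\uu=(u_1,u_2)$, which solves the last two equations of \eqref{eq:zu}, i.e. $L=L_0=-\partial_{xx}+W''(H)$ and $G=-(N-\langle N,Y\rangle)=-N^\perp$. The natural weight to choose is $\Phi=\Phi_A$ as defined above, with $\zeta=\zeta_A=\sqrt{\Phi_A'}$; since $\zeta_A^2(x)=\exp(-\frac1A(1-\chi(x))|x|)$ behaves like $e^{-|x|/A}$ for large $|x|$ and is $1$ near the origin, $\Phi_A$ is odd, bounded and strictly increasing, and the identity \eqref{eq:vir1} applies. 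The point of this weight is that the ``spectral'' terms coming from $\Phi_A$ are harmless: $\zeta_A''/\zeta_A-(\zeta_A'/\zeta_A)^2 = O(1/A)$ is a small, slowly varying error, and $\Phi_A P'/\zeta_A^2$ is controlled because $P'=(W''(H))'=W'''(H)H'$ is exponentially localized (it lies in a space like $\cY$) while $\Phi_A$ is bounded, so this term is also $O(1)$ times a localized density — in fact it contributes a term that can be absorbed using the local norm $\|\rho^2 u_1\|^2$ on the right-hand side once $A$ is large.

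The main steps, in order, are as follows. First, plug $(a_1,a_2)=(u_1,u_2)$, $P=W''(H)$, $G=-N^\perp$, $\Phi=\Phi_A$ into \eqref{eq:vir1} and estimate each term. The leading term $-\int(\partial_x\tilde u_1)^2$ with $\tilde u_1=\zeta_A u_1$ is negative and will produce, after unwinding the change of variables, the term $\|\sigma_A\partial_x u_1\|^2$ on the left (using that $\zeta_A$ and $\sigma_A=\sech(\frac2A x)$ are comparable at the relevant scale, up to harmless lower-order pieces). A Hardy/Poincaré-type inequality at scale $A$ then recovers the zeroth-order terms $\frac1{A^2}\|\sigma_A u_1\|^2$ and (since $\dot u_1=u_2$, or rather via the standard trick of also differentiating $\int \zeta_A^2 u_1 u_2$ in time) the term $\frac1{A^2}\|\sigma_A u_2\|^2$. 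Second, handle the nonlinear term $\int N^\perp(\Phi_A\partial_x u_1+\frac12\Phi_A' u_1)$: since $\|N\|\lesssim\|\varphi_1\|_{L^\infty}\|\varphi_1\|\lesssim\delta\|\varphi_1\|$ and $\langle N,Y\rangle Y$ is exponentially localized, and $\|\varphi_1\|_{H^1}\lesssim\delta$, this term is of size $\delta$ times the coercive quadratic quantity plus controlled lower-order terms; choosing $\delta$ small (depending on $A$, through the bound $\|\Phi_A\|_{L^\infty}\lesssim A$) absorbs it into the left-hand side, with the residual $A\delta^2$ term. Third, integrate in time on $[0,T]$: the boundary terms $\cA(T)-\cA(0)$ are bounded by $\sup_t\|\Phi_A\|_{L^\infty}(\|\partial_x u_1\|+\|\Phi_A'/\Phi_A\| \cdots)\|u_2\|\lesssim A\delta^2$ by \eqref{eq:unif}, which is where the $A\delta^2$ on the right comes from. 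Fourth, the internal-mode coupling: the term $\langle N,Y\rangle$ enters $G$, but $\langle N,Y\rangle$ itself is $O(\delta|\zz|)+O(\delta\|u_1\|_{\text{loc}})$, and more precisely the quadratic-in-$\zz$ contribution $\langle W''(H)(z_1 Y)^2/?,Y\rangle$–type terms produce exactly the $|\zz|^4$ (after an extra power from smallness) and $\|\rho^2 u_1\|^2$ on the right-hand side; one expands $N$ around $u_1$ in powers of the $z_1 Y$ piece and keeps track of which pieces are localized.

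The hard part will be the bookkeeping around the internal-mode component: one must show that, after the virial manipulation, the only uncontrolled remainders involving $\zz$ are genuinely of order $|\zz|^4$ (quartic) rather than quadratic, since a quadratic term $|\zz|^2$ on the right would be useless at this stage (it is the same order as the left-hand side). This requires using the orthogonality $\langle u_i,Y\rangle=0$ and the specific structure of $N^\perp$, together with the fact that the purely quadratic-in-$z_1$ part of $N$ is $\frac12 W'''(H)(z_1Y)^2$, whose contribution to the virial either vanishes by oddness/parity or is exponentially localized and hence absorbable into $\|\rho^2 u_1\|^2$ after using Cauchy–Schwarz with a small constant; the genuinely nonabsorbable part is cubic or higher in $z_1$, giving $|\zz|^3\cdot(\text{small})$ which, squared or combined with $\delta$-smallness and Young's inequality, yields $|\zz|^4$. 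The secondary technical point is making the weights $\zeta_A$, $\sigma_A$, $\chi_A$ all interchangeable up to acceptable errors and verifying the coercivity (Poincaré at scale $A$) with the correct $1/A^2$ constants; these are routine but must be done carefully to get the stated powers of $A$.
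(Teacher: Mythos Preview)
Your overall plan---the virial functional $\cI=\int(\Phi_A\partial_x u_1+\frac12\Phi_A'u_1)u_2$, the auxiliary functional $\int\sigma_A^2 u_1u_2$ to recover $\|\sigma_A u_2\|^2$, and the Poincar\'e-type inequality at scale $A$---matches the paper. But your treatment of the nonlinear term $\int N^\perp(\Phi_A\partial_x u_1+\frac12\Phi_A'u_1)$ has a real gap.

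The problem is the part of $N$ that is purely quadratic (and higher) in $u_1$, say $N_2=W'(H+u_1)-W'(H)-W''(H)u_1$. Your crude estimate $\|N\|\lesssim\delta\|\varphi_1\|$ together with $|\Phi_A|\lesssim A$ gives at best $|\int N_2\Phi_A\partial_x u_1|\lesssim A\delta\|u_1\|\|\partial_x u_1\|\lesssim A\delta^3$, a \emph{uniform-in-time} bound which becomes $A\delta^3 T$ after integrating on $[0,T]$ and is useless. To absorb this term into the coercive quantity $\|\partial_x\tilde u_1\|^2$ you need the weight $\Phi_A'=\zeta_A^2$, not $\Phi_A$, and since $|\Phi_A|/\zeta_A^2$ is unbounded there is no direct trade. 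The paper uses the exact identity $N_2\,\partial_x u_1=\partial_x G_1-G_2 H'$, where $G_1=W(H+u_1)-W(H)-W'(H)u_1-\frac12 W''(H)u_1^2$ and $|G_1|,|G_2|\lesssim|u_1|^3$; integrating by parts converts $\int N_2\Phi_A\partial_x u_1$ into $\int(\frac12 N_2 u_1-G_1)\Phi_A'$ plus localized remainders, and now $\Phi_A'=\zeta_A^2$ appears. The resulting $\int\zeta_A^2|u_1|^3\lesssim A^2\|u_1\|_{L^\infty}\|\partial_x\tilde u_1\|^2$ is absorbed for $\delta\ll A^{-2}$. Without this divergence-form trick the estimate does not close.

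You also mislocate the source of $|\zz|^4$. The quadratic-in-$z_1$ piece of $N$, namely $N_1$ with $|N_1|\lesssim z_1^2 Y^2$, does not vanish by parity; it is exponentially localized, so paired with $\Phi_A\partial_x u_1+\frac12\Phi_A'u_1$ it gives $z_1^2(\|\rho^2\partial_x u_1\|+\|\rho^2 u_1\|)$, and Young's inequality then produces $|\zz|^4$ plus terms absorbable into the left-hand side. It is this quadratic part---not a cubic-in-$z_1$ remainder---that is responsible for the $|\zz|^4$ on the right.
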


\begin{proof}
Consider the time dependent function $\cI(t)$ defined by
\[
\cI = \int\left(\Phi_A \partial_x u_{1}+\frac{1}{2}\Phi_A' u_1\right) u_2
\]
and denote $\tilde u_1=\zeta_A u_1$.
The system \eqref{eq:zu} and the identity \eqref{eq:vir1} imply
\begin{equation}
\begin{aligned}\label{est:vir1}
\dot \cI 
&= -\int (\partial_x \tilde u_1)^2 
- \frac{1}{2} \int \left(\frac{\zeta_A''}{\zeta_A}- \frac{(\zeta_A')^2}{\zeta_A^2}\right) \tilde u_1^2 
+ \frac 12 \int \frac{\Phi_A}{\zeta_A^2} [W''(H)]' \tilde u_1^2 \\
& \quad - \int N^\perp \left(\Phi_A \partial_xu_1 +\frac{1}{2}\Phi'_A u_1\right).
\end{aligned}
\end{equation}
First, by elementary computations (see detailed computations in \cite[Proof of Lemma 1]{KMM4})
\begin{equation*}
\frac{\zeta_A''}{\zeta_A}-\frac{(\zeta_A')^2}{\zeta_A^2}
=\frac 1A\left[\chi''(x) |x|+2\chi'(x)\sgn(x)\right]
\end{equation*}
and so  
\begin{equation*}
\left|\frac{\zeta_A''}{\zeta_A}-\frac{(\zeta_A')^2}{\zeta_A^2}\right|
\lesssim \frac{\rho^4}A.
\end{equation*}
This implies that
\[
\left|\int \left(\frac{\zeta_A''}{\zeta_A} - \frac{(\zeta_A')^2}{\zeta_A^2}\right) \tilde u_1^2\right|
\lesssim \frac 1A \|\rho^2 \tilde u_1\|^2 \lesssim \frac 1A \|\rho^2 u_1\|^2.
\]
Second, since 
$[W''(H)]'=H' W'''(H)\in \cY$, $\zeta_A\geq e^{-\frac{|x|}A}$ and 
$|\Phi_A|\leq |x|$, we also have, for $A$ large,
\[
\left|\frac{\Phi_A}{\zeta_A^2} [W''(H)]'\right|
\lesssim |x| e^{\frac2A|x|} \rho^5 \lesssim \rho^4.
\]
Thus,
\[
\left|\int \frac{\Phi_A}{\zeta_A^2} [W''(H)]' \tilde u_1^2\right|
\lesssim \|\rho^2 \tilde u_1\|^2 \lesssim \|\rho^2 u_1\|^2.
\]
For the last term in \eqref{est:vir1}, we decompose $N$ defined in \eqref{def:N}, as
\begin{equation*}
N=N_1+N_2
\end{equation*}
where
\begin{align*}
N_1 & = W'(H+\varphi_1)-W'(H+u_1)-W''(H)(\varphi_1-u_1),\\
N_2 & = W'(H+u_1)-W'(H)-W''(H)u_1.
\end{align*}
Using $\varphi_1=u_1 + z_1 Y$, we have by Taylor expansion and $Y\in \cY$,
\[
|N_1|
= |W'(H+u_1+z_1Y)-W'(H+u_1)- W''(H)z_1 Y| \lesssim z_1^2 Y^2
\lesssim z_1^2 \rho^{10}.
\]
Thus, using $|\Phi_A|\leq |x|$, $|\Phi_A'|\leq 1$ and the Cauchy Schwarz inequality,
\[
\left| \int N_1^\perp \left(\Phi_A \partial_x u_1+\frac{1}{2}\Phi'_A u_1\right) \right|
\lesssim z_1^2 \left( \|\rho^2 \partial_x u_1\| + \|\rho^2 u_1\|\right).
\]
To deal with the term $N_2$, we set
\begin{align*}
G_1 &= W(H+u_1)-W(H)-W'(H)u_1-\frac 12 W''(H)u_1^2,\\
G_2 &= W'(H+u_1)-W'(H)-W''(H) u_1- \frac 12 W'''(H) u_1^2
\end{align*}
so that
$\partial_x G_1 = N_2 \partial_x u_1 + G_2 H'$ and thus
\[
N_2^\perp \partial_x u_1 = \partial_x G_1 - G_2 H' - {\langle N_2,Y\rangle} Y \partial_x u_1.
\]
Therefore, by integration by parts,
\begin{align*}
\int N_2^\perp \left(\Phi_A \partial_x u_1+\frac{1}{2}\Phi'_A u_1\right)
&= \int \left(-G_1+\frac 12 N_2 u_1\right) \Phi_A' - \int G_2H' \Phi_A \\
&\quad + \langle N_2,Y\rangle \int \left( \Phi_A Y' + \frac 12 \Phi_A' Y\right) u_1.
\end{align*}
Using the following pointwise estimates (Taylor expansions)
\begin{align*}
|G_1|+|G_2| \lesssim |u_1|^3, \quad
|N_2|\lesssim |u_1|^2,
\end{align*}
and $H'$, $Y\in \cY$, $|\Phi_A'|\leq 1$, $|\Phi_A|\lesssim |x|$, we have
\begin{align*}
&\left|\int G_1 \Phi_A' \right|+\left|\int N_2 u_1\Phi_A' \right|\lesssim \int \Phi_A' |u_1|^3,\\
&\left|\langle N_2,Y\rangle \int \left( \Phi_A Y' + \frac 12 \Phi_A' Y\right) u_1\right|
\lesssim \|\rho^2 u_1\|^3 \lesssim \delta \|\rho^2 u_1\|^2.
\end{align*}
Thus,
\[
\left| \int N_2^\perp \left(\Phi_A \partial_x u_1+\frac{1}{2}\Phi'_A u_1\right)\right| \lesssim 
\int \Phi_A' |u_1|^3 +\delta\|\rho^2 u_1\|^2.
\]
Using $\Phi_A'=\zeta_A^2$ and the following estimate from \cite[Claim 1]{KMM4}
\begin{equation*}
\int \zeta_A^2 |u_1|^{3}\lesssim A^2\|u_1\|_{L^\infty}\|\partial_x \tilde u_1\|^2
\end{equation*}
we obtain using~\eqref{eq:unif},
\begin{align*}
\left| \int N_2^\perp \left(\Phi_A \partial_x u_1+\frac{1}{2}\Phi'_A u_1\right)\right| 
&\lesssim A^2\|u_1\|_{L^\infty} \|\partial_x \tilde u_1\|^2 +\delta\|\rho^2 u_1\|^2\\
&\lesssim A^2\delta \|\partial_x \tilde u_1\|^2 +\delta\|\rho^2 u_1\|^2 .
\end{align*}
Taking $\delta$ small depending on $A$, we find, for a constant $C>0$,
\[
\dot \cI \leq - \frac 12 \|\partial_x \tilde u_1\|^2 + C \|\rho^2 u_1\|^2
+C z_1^2 \left( \|\rho^2 \partial_x u_1\| + \|\rho^2 u_1\|\right).
\]

Now, we claim 
\begin{equation}\label{eq:claim}
\|\sigma_A \partial_x u_1\|^2 + \frac 1{A^2} \|\sigma_A u_1\|^2
\lesssim \|\partial_x \tilde u_1\|^2
+ \frac 1A\|\rho^2 u_1\|^2.
\end{equation}
Indeed, first, we compute using $\tilde u_1 = \zeta_A u_1$ and integration by parts,
\[
\int \zeta_A^2 |\partial_x \tilde u_1|^2 = \int \zeta_A^4 |\partial_x u_1|^2 - \int \zeta_A^3\zeta_A'' u_1^2
- 2 \int \zeta_A^2(\zeta_A')^2 u_1^2,
\]
so that using $\sigma_A \lesssim \zeta_A^2 \lesssim \sigma_A$ (by definition of $\zeta_A$ and $\sigma_A$) and $|\zeta_A''|\zeta_A+|\zeta_A'|^2\leq A^{-2} \sigma_A$, we have 
\[
\int \sigma_A^2 |\partial_x u_1|^2 \lesssim \int \sigma_A |\partial_x \tilde u_1|^2 + A^{-2} \int \sigma_A\tilde u_1^2.
\]
Second, we recall the elementary inequality (see \emph{e.g.}~\cite[Lemma~4 (2)]{KMM4})
\[
 \int \sigma_A |\tilde u_1|^2
\lesssim A^2 \int |\partial_x \tilde u_1|^2 + A \int \rho^4 \tilde u_1^2.
\]
This is sufficient to complete the proof of \eqref{eq:claim}.

We conclude, for constants $C$, $C'>0$,
\[
\|\sigma_A \partial_x u_1\|^2 + \frac 1{A^2} \|\sigma_A u_1\|^2\leq - C \dot \cI + C' \left(\|\rho^2 u_1\|^2 + |\zz|^4\right).
\]
By integrating this estimate on $[0,T]$, using $|\cI|\leq C A \left( \|u_1\|_{H^1}^2+ \|u_2\|^2\right)
\leq C A \delta^2$,
we find
\[
\int_0^T \left( \|\sigma_A \partial_x u_1\|^2 + \frac 1{A^2} \|\sigma_A u_1\|^2 \right) \udd t
\lesssim A \delta^2 + \int_0^T \left(\|\rho^2 u_1 \|^2+ |\zz|^4 \right) \udd t .
\]

To complete the proof of the proposition, we only need to prove the same estimate for $\int_0^T \|\sigma_A u_2\|^2\ud t$.
For this, we introduce the functional
\[
\cH = \int \sigma_A^2 u_1 u_2 .
\]
Using~\eqref{eq:zu}, the expression of $L_0$ and integration by parts, we compute
\[
\dot \cH =
\int \sigma_A^2 u_2^2 - \int \sigma_A^2 (\partial_x u_1)^2 + \frac 12 \int (\sigma_A^2)'' u_1^2
- \int \sigma_A^2 W''(H) u_1^2 - \int \sigma_A^2 N^\perp u_1.
\]
Using $|(\sigma_A^2)''|\lesssim A^{-2} \sigma_A^2\lesssim \sigma_A^2$, $|W''(H)|\lesssim 1$, and 
$|N^\perp|\lesssim u_1^2 + Y^2 z_1^2$, we find for some constant $C>0$,
\[
\dot \cH \geq 
\|\sigma_A u_2\|^2 - \|\sigma_A \partial_x u_1\|^2 - C\left(\|\sigma_A u_1\|^2+|\zz|^4 \right) .
\]
By integrating on $[0,T]$, using $|\cH|\lesssim\|u_1\|\|u_2\|\lesssim \delta^2$,
we obtain
\begin{equation*}
\frac 1{A^2}\int_0^T \|\sigma_A u_2\|^2 \ud t
\lesssim \delta^2+\frac 1{A^2} \int_0^T \left( \|\sigma_A \partial_x u_1\|^2 + \|\sigma_A u_1\|^2+|\zz|^4 \right) \udd t,
\end{equation*}
which implies the desired estimate.
\end{proof}

\section{Estimate of the internal mode component}

The second key estimate of the proof of Theorem~\ref{TH:1}, given in the next proposition, controls (in time average) the function $|\zz|^2$ in terms of a local norm of $u_1$.
Its proof relies on the Fermi golden rule (Hypothesis 2) and the introduction of a new functional $\mathcal J$ (see Remark~\ref{RK:2} after the proof for a comparison with the argument in \cite{KMM}).

\begin{proposition}\label{PR:2}
For any $A>0$ large, any $\delta>0$ small (depending on $A$) and any $T>0$, it holds\[
\int_0^T |\zz|^4 \ud t
\lesssim A \delta^2 + \frac 1{\sqrt{A}} \int_0^T \|\rho^2 u_1 \|^2 \ud t .
\]
\end{proposition}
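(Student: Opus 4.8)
The plan is to run a virial-type argument on the internal-mode system~\eqref{eq:zu}, but using a cleverly modified quantity rather than the naive energy $|\zz|^2$. Recall that the Fermi golden rule is precisely the statement that the leading interaction term $\langle N,Y\rangle$, which at quadratic order contains $\frac12 z_1^2\langle W'''(H)Y^2, \cdot\rangle$, resonates with the function $g$ solving $L_0 g = 4\lambda^2 g$; here $4\lambda^2$ is the frequency of $z_1^2$ viewed as a forcing term (since $z_1 \approx a\sin(\lambda t + \theta)$ oscillates at frequency $\lambda$, so $z_1^2$ oscillates at $2\lambda$, which matches a dispersive wave of frequency $2\lambda$, i.e. an element of the continuous spectrum of the Klein--Gordon operator since $\tfrac{\omega}{2}<\lambda$ by Remark~\ref{RK:1}). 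First I would introduce the functional
\[
\cJ = \frac12 |\zz|^2 \big(\text{phase factor involving } z_1, z_2\big)^{?} \ \text{or more likely}\ \cJ = \big(\text{something like } z_1^2 - z_2^2 \text{ and } z_1 z_2 \text{ paired with } \langle u_1, g\rangle, \langle u_2, g\rangle\big),
\]
so that $\cJ$ mixes the $\zz$ variables at \emph{quadratic} order with the radiation $\uu$ tested against $g$. The point is that $\tfrac{d}{dt}|\zz|^2$ is, by~\eqref{eq:zu}, equal to $-\tfrac2\lambda z_2\langle N,Y\rangle$, which at leading order is a cubic term in $\zz$ that oscillates (so its time integral is $O(\delta^2)$, not sign-definite) — we need to extract from the next order a damping term $\sim |\zz|^4$. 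The correction term in $\cJ$ involving $\langle u_i, g\rangle$ is designed so that, upon differentiating and using the $u$-equation $\dot u_2 = -L_0 u_1 - N^\perp$ together with $L_0 g = 4\lambda^2 g$, the bad oscillatory contributions cancel against $\tfrac{d}{dt}|\zz|^2$, leaving a term proportional to $\big(\int W'''(H) Y^2 g\big)\, z_1^4$ (up to oscillation in the phase), whose coefficient is nonzero by~\eqref{fgr}.

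Concretely, the steps I would carry out: (1) Define $\cJ$ as above and compute $\dot{\cJ}$ using~\eqref{eq:zu}, $L_0 Y = \lambda^2 Y$, $L_0 g = 4\lambda^2 g$, and integration by parts; the key algebraic cancellation is that the resonant frequency $2\lambda$ of the forcing $z_1^2$ matches the eigenvalue equation for $g$, so the ``secular'' part survives. (2) Identify the main term as $c_0\, a^4$ where $a^2 = |\zz|^2$ is slowly varying and $c_0 = c\,(\int W'''(H)Y^2 g)^2 > 0$ — here one must be careful that it is really $|\zz|^4$ that appears and not an oscillatory quantity; this typically requires either using a phase-adapted version of $|\zz|^2$ or averaging over a period, exploiting that $\tfrac{d}{dt}|\zz|^2 = O(\delta |\zz|^2)$ is small so $|\zz|$ is essentially constant on the time scale $1/\lambda$. (3) Bound all error terms: the cubic errors in $\zz$ that are genuinely oscillatory integrate to $O(\delta^2)$ after integration by parts in time (gaining a factor $\delta$), the coupling terms between $\zz$ and $u_1$ are controlled by Cauchy--Schwarz as $\lesssim \eta |\zz|^4 + C_\eta \|\rho^2 u_1\|^2$ with a small constant $\eta$ that can be absorbed, and the genuinely nonlinear terms ($N$ beyond quadratic order, i.e. $O(|\varphi_1|^3)$) contribute $O(\delta)$ times the main terms. (4) Absorb, use $|\cJ| \lesssim \delta^2$ from~\eqref{eq:unif} for the boundary terms at $t=0,T$, and conclude $\int_0^T |\zz|^4 \lesssim A\delta^2 + \tfrac{1}{\sqrt A}\int_0^T \|\rho^2 u_1\|^2$. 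The factor $\tfrac{1}{\sqrt A}$ (rather than just a constant) presumably comes from splitting the local norm of $u_1$ using a weight that decays like $\rho^2$ but whose tail is controlled at scale $A$, or from the interplay with Proposition~\ref{PR:1}: a smallness factor is needed so that when Propositions~\ref{PR:1} and~\ref{PR:2} are combined in Sect.~6, the $\|\rho^2 u_1\|^2$ terms do not close a vicious circle.

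The main obstacle I expect is Step (2) — extracting a \emph{sign-definite} $|\zz|^4$ term rather than a term that merely oscillates with the same cubic/quartic size. The naive computation of $\tfrac{d}{dt}|\zz|^2$ gives only oscillatory cubic terms, and the whole point of introducing $\cJ$ is that the \emph{resonant} part of the radiation field generated by the internal mode feeds back coherently. Making this rigorous requires carefully tracking the phase: one writes $z_1 = a\cos\psi$, $z_2 = -a\sin\psi$ with $\dot\psi = \lambda + O(\delta)$, $\dot a = O(\delta a)$, and checks that after substituting into $\dot\cJ$ and integrating over one period, the surviving term is $\int_0^T c_0 a^4 \,dt$ with $c_0 > 0$; alternatively, the modulation structure is built directly into the definition of $\cJ$ via projections onto $e^{\pm 2i\lambda t}$ modes so that the resonance appears without explicit period-averaging. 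A secondary difficulty is ensuring that the term $\langle u_1, g\rangle$ (which is \emph{not} small — it is only $O(\delta)$, not $O(\delta^2)$, and $g$ is not in $L^2$ but merely bounded) still yields a well-defined and suitably bounded functional; this likely forces one to test $u_1$ against a \emph{truncated} version of $g$, say $\chi_A g$ or $g$ localized by $\rho$, which is exactly where additional error terms weighted by powers of $A$ — hence the $\tfrac{1}{\sqrt A}$ gain, or conversely an $A$-dependent loss that is harmless — would enter. Beyond these, the remaining estimates are routine Taylor expansions and Cauchy--Schwarz of the type already displayed in the proof of Proposition~\ref{PR:1}.
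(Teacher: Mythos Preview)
Your proposal is correct and follows essentially the same approach as the paper: the functional $\cJ$ does pair the quadratic quantities $\alpha=z_1^2-z_2^2$ and $\beta=2z_1z_2$ with $\int u_2\, g\chi_A$ and $\int u_1\, g\chi_A$ respectively (plus a cubic correction $\tfrac{\Gamma}{2\lambda}\beta|\zz|^2$), and the truncation $\chi_A$ is precisely what produces the $A^{-1/2}$ gain, after which Proposition~\ref{PR:1} is invoked to close.

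Regarding your flagged ``main obstacle'' in Step~(2): the paper avoids phase-averaging or polar coordinates entirely. The computation of $\dot\cJ$ yields $\Gamma\alpha^2$ (not yet $|\zz|^4$), and the conversion to a sign-definite $|\zz|^4$ is achieved algebraically by introducing the auxiliary functional $\cZ=\tfrac{\Gamma}{4\lambda}\alpha\beta$, whose derivative satisfies $\dot\cZ\approx \tfrac{\Gamma}{2}(\beta^2-\alpha^2)$ via~\eqref{eq:ab}; summing gives $\dot\cJ+\dot\cZ\approx \tfrac{\Gamma}{2}(\alpha^2+\beta^2)=\tfrac{\Gamma}{2}|\zz|^4$. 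This is the clean resolution of the issue you anticipated, and it replaces the period-averaging you suggested with a single bounded correction.
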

\begin{proof}
We introduce the time dependent functions
\[
\alpha = z_1^2-z_2^2, \qquad \beta = 2z_1z_2.
\]
Using \eqref{eq:zu}, we have
\begin{equation}\label{eq:ab}
\begin{cases}
\dot \alpha = 2 \lambda \beta + \frac2\lambda\langle N,Y\rangle z_2 \\
\dot \beta = -2 \lambda \alpha - \frac2\lambda \langle N,Y\rangle z_1 .
\end{cases}
\end{equation}
Moreover,
\begin{equation}\label{eq:z2}
\frac \ud{\ud t} \left( |\zz|^2 \right) = -\frac{2}\lambda \langle N,Y\rangle z_2 .
\end{equation}
By Hypothesis 2, there exists a $\cC^\infty$ bounded function $g$ satisfying
$L_0 g = 4 \lambda^2 g$ and such that
\[
\Gamma := \frac 14 \int W'''(H) Y^2 g \neq 0.
\]
We define the functional $\mathcal J$ by
\[
\mathcal J = - \alpha \int u_2 g \chi_A + 2 \lambda \beta \int u_1 g \chi_A + \frac {\Gamma}{2\lambda}\beta |\zz|^2 
\] 
where $\chi_A$ is defined in \eqref{def:chiB}.
By direct computations, using \eqref{eq:zu}, \eqref{eq:ab} and \eqref{eq:z2}, 
\begin{align*}
\dot{\mathcal J} 
&= \alpha \int \left(L_0 u_1 - 4\lambda^2 u_1\right) g\chi_A\\
&\quad + \alpha \left( - \Gamma |\zz|^2 + \int N^\perp g \chi_A \right)\\
&\quad - \frac2\lambda \langle N,Y\rangle \left(z_2 \int u_2 g \chi_A + 2 \lambda z_1 \int u_1 g \chi_A\right) \\
&\quad +\frac{\Gamma}{\lambda^2} \langle N,Y\rangle \left(z_1 |\zz|^2 + z_2 \beta\right) 
= J_1 + J_2 + J_3 + J_4.
\end{align*}
For $J_1$, we integrate by parts 
\[
\int u_1 g'' \chi_A 
= \int (\partial_{xx} u_1) g \chi_A + 2 \int (\partial_x u_1) g \chi_A' + \int u_1 g \chi_A'',
\]
and use $(L_0 - 4\lambda^2) g = 0$ to obtain
\[
J_1 = -\alpha \left( 2 \int (\partial_x u_1) g \chi_A' + \int u_1 g \chi_A''\right).
\]
For the first term,
using $|g|\lesssim 1$, the definition of $\chi_A$, the Cauchy-Schwarz inequality and then $\sigma_A \gtrsim 1$ on $[-2A,2A]$, we have 
\begin{align*}
\left| \int (\partial_x u_1) g \chi_A' \right|
&\lesssim \frac 1A\int_{|x|<2A} |\partial_x u_1|
\lesssim \frac 1{\sqrt{A}} \left(\int_{|x|<2A} |\partial_x u_1|^2\right)^{\frac 12}
\lesssim \frac 1{\sqrt{A}} \|\sigma_A \partial_x u_1\|.
\end{align*}
Similarly, for the second term in $J_1$, we have
\[
\left|\int u_1 g \chi_A''\right|
\lesssim \frac 1{A^2} \int_{|x|<2A} |u_1|
\lesssim \frac 1{A\sqrt{A}} \left(\int_{|x|<2A} u_1^2 \right)^{\frac 12}
\lesssim \frac 1{A\sqrt{A}} \| \sigma_A u_1\|.
\]
Therefore, for $J_1$, we have
\[
|J_1|\lesssim 
\frac {|\alpha|}{\sqrt{A}}\left(\|\sigma_A \partial_x u_1\|^2 + \frac 1{A^2} \|\sigma_A u_1\|^2\right)^{\frac 12}
\lesssim \frac 1{\sqrt{A}} \left(\|\sigma_A \partial_x u_1\|^2 + \frac 1{A^2} \|\sigma_A u_1\|^2
+|\zz|^4\right).
\]
We turn to the term $J_2$.
We decompose from \eqref{def:N},
\begin{equation}\label{eq:decN}
N= z_1^2 R_0 + R_1+R_2
\end{equation}
where
\begin{align*}
R_0&= \frac 12 W'''(H) Y^2,\\
R_1&=z_1 W'''(H)Y u_1 + \frac 12 W'''(H) u_1^2,\\
R_2&= W'(H+\varphi_1)-W'(H)-W''(H)\varphi_1
-\frac 12 W'''(H) \varphi_1^2.
\end{align*}
We compute using $\int R_0 g = 2 \Gamma$,
\begin{align*}
\int N^\perp g \chi_A & =
\int N g \chi_A - \langle N,Y\rangle \int Y g \chi_A\\
& = 2\Gamma z_1^2 - z_1^2\int R_0 g (1-\chi_A) +\int R_1 g \chi_A + \int R_2 g \chi_A- \langle N,Y\rangle \int Y g \chi_A.
\end{align*}
By $|g|\lesssim 1$, $R_0\in \cY$ and the definition of $\chi_A$, we have
\[
\left|z_1^2 \int R_0 g (1-\chi_A) \right| \lesssim z_1^2 \int_{|x|\geq A} \rho^{10}
\lesssim \rho(A)z_1^2.
\]
By the definition of $R_1$ and $Y\in \mathcal Y$, we have
$|R_1|\lesssim |z_1| |u_1| \rho^5 + u_1^2$. Thus, 
using $\chi_A \lesssim \sigma_A^2$,
\[
\left| \int R_1 g \chi_A\right|
\lesssim |z_1| \|\rho^2 u_1\| + \|\sigma_A u_1\|^2.
\]
By the definition of $R_2$ and $Y\in \mathcal Y$,
\[
|R_2|\lesssim |\varphi_1|^3 \lesssim |Y|^3 |z_1|^3 + |u_1|^3
\lesssim \delta \left(\rho^{15} z_1^2 + |u_1|^2\right).
\]
Thus, using $\chi_A \lesssim \sigma_A^2$,
\[
\left| \int R_2 g \chi_A\right|
\lesssim \delta \left(z_1^2 + \|\sigma_A u_1\|^2\right).
\]
Note that since $L_0Y=\lambda^2 Y$ and $L_0 g = 4\lambda^2 g$, 
we have $\int Y g = 0$.
In particular, since $Y\in \mathcal Y$,
\[
\left| \int Y g \chi_A \right| = \left| \int Y g (1 - \chi_A) \right| 
\lesssim \int_{|x|>A} \rho^5\lesssim \rho(A).
\]
Moreover, by the definition of $N$, we have
\begin{equation}\label{NY}
| \langle N, Y \rangle | \lesssim |z_1|^2 + \|\rho^2 u_1\|^2.
\end{equation}
Thus,
\[
\left| \langle N,Y\rangle \int Y g \chi_A\right|
\lesssim \rho(A) \left(z_1^2 + \|\rho^2 u_1\|^2\right).
\]
Gathering the last estimates, we have proved
\[
\left|2 \Gamma z_1^2 - \int N^\perp g \chi_A \right|
\lesssim |z_1| \|\rho^2 u_1\| + \|\sigma_A u_1\|^2 + (\rho(A)+\delta) z_1^2 . 
\]
Since $2z_1^2-|\zz|^2 = \alpha$,
we have obtained
\begin{align*}
\left|J_2 - \Gamma \alpha^2 \right|
&\lesssim |\alpha| \left( |z_1| \|\rho^2 u_1\| + \|\sigma_A u_1\|^2 + (\rho(A)+\delta) z_1^2 \right)\\
&\lesssim \left(\frac1{\sqrt{A}}+\delta\right) |\zz|^4+ \delta \|\sigma_A u_1\|^2.
\end{align*}
Next, we estimate $J_3$. Using \eqref{NY} and the Cauchy Schwarz inequality, we have
\begin{align*}
|J_3|
&\lesssim |\zz| | \langle N, Y \rangle | \sqrt{A} \left( \|\sigma_A u_1\| + \|\sigma_A u_2\| \right)\\
&\lesssim |\zz| \sqrt{A} \left( |z_1|^2 + \|\rho^2 u_1\|^2 \right) \left( \|\sigma_A u_1\| + \|\sigma_A u_2\| \right)\\
&\lesssim \delta |\zz|^4+ \delta A\left(\|\sigma_A u_1\|^2 + \|\sigma_A u_2\|^2 \right).
\end{align*}
Last, we have similarly
\[
|J_4| \lesssim |\zz|^3 | \langle N, Y \rangle |
\lesssim |\zz|^3\left( |z_1|^2 + \|\rho u_1\|^2 \right)
\lesssim \delta |\zz|^4+ \delta\|\sigma_A u_1\|^2 .
\]
In conclusion, choosing $ \delta\leq A^{-\frac 72}$, we have obtained
\begin{equation}\label{eq:dJ}
\left| \dot {\mathcal J} - \Gamma \alpha^2 \right| 
\lesssim \frac 1{\sqrt{A}} \left(\|\sigma_A \partial_x u_1\|^2 + \frac 1{A^2} \|\sigma_A u_1\|^2
 + \frac 1{A^2} \|\sigma_A u_2\|^2+|\zz|^4\right). 
\end{equation}

Next, we set
\[
\cZ = \frac{\Gamma}{4\lambda}\alpha \beta.
\]
Using \eqref{eq:ab}, we have
\[
\dot \cZ = \frac \Gamma 2 \left(\beta^2 - \alpha^2\right)
+ \frac{\Gamma}{2\lambda^2} \langle N, Y \rangle \left( \beta z_2 - \alpha z_1\right).
\]
Since $|\langle N, Y \rangle \left( \alpha z_1 +\beta z_2 \right)|\lesssim |\zz|^3 |\langle N,Y\rangle|$, we estimate this term as the term $J_4$ above. We obtain, for $A$ large depending on $\delta$,
\begin{equation}\label{eq:dZ}
\left| \dot\cZ - \frac \Gamma 2\left(\beta^2 - \alpha^2\right)\right| 
\lesssim \frac 1{\sqrt{A}} \left(\|\sigma_A \partial_x u_1\|^2 + \frac 1{A^2} \|\sigma_A u_1\|^2
 + \frac 1{A^2} \|\sigma_A u_2\|^2+|\zz|^4\right).
\end{equation}

Combining \eqref{eq:dJ}, \eqref{eq:dZ} and observing that $\alpha^2 + \beta^2 = |\zz|^4$, we obtain
\begin{equation*}
\left| \dot {\mathcal J} + \dot \cZ- \frac{\Gamma}{2} |\zz|^4 \right| 
\lesssim \frac 1{\sqrt{A}} \left(\|\sigma_A \partial_x u_1\|^2 + \frac 1{A^2} \|\sigma_A u_1\|^2
 + \frac 1{A^2} \|\sigma_A u_2\|^2+|\zz|^4\right). 
\end{equation*}
In particular, for a constant $C$,
\[
|\zz|^4 
\leq \frac 2\Gamma \left( \dot {\mathcal J} + \dot \cZ\right)
+\frac C{\sqrt{A}} \left(\|\sigma_A \partial_x u_1\|^2 + \frac 1{A^2} \|\sigma_A u_1\|^2
+ \frac 1{A^2} \|\sigma_A u_2\|^2+|\zz|^4\right).
\]
By integrating this estimate on $[0,T]$, using
\[
|\cJ| 
\lesssim |\zz|^4+|\zz|^2\int_{|x|\leq 2A} \left(|u_1|+|u_2| \right)
\lesssim |\zz|^4 + |\zz|^2 \sqrt{A} \left(\|u_1\|+\|u_2\|\right)
\lesssim \sqrt{A}\delta^3,
\]
and $|\cZ|\lesssim |\zz|^4 \lesssim \delta^4$,
we conclude
\[
\int_0^T |\zz|^4 \ud t 
\lesssim \sqrt{A} \delta^4 + \frac 1{\sqrt{A}} \int_0^T \left(\|\sigma_A \partial_x u_1\|^2 + \frac 1{A^2} \|\sigma_A u_1\|^2
+ \frac 1{A^2} \|\sigma_A u_2\|^2+|\zz|^4\right) \udd t.
\]
Last, using 
Proposition \ref{PR:1}, we obtain
\[
\int_0^T |\zz|^4 \ud t 
\lesssim \sqrt{A} \delta^2 + \frac 1{\sqrt{A}} \int_0^T \left(\|\rho^2 u_1\|^2 + |\zz|^4\right) \udd t.
\]
Taking $A$ large enough (independent of $\delta$), we obtain
the estimate of Proposition~\ref{PR:2}.
\end{proof}

\begin{remark}\label{RK:2}
The functional $\mathcal J$ introduced in the proof of Proposition~\ref{PR:2}
may seem similar to the one introduced in \cite[Eq. (4.7)]{KMM}, however we observe that the use of the function $g$ in the present paper allows to rely on the natural Fermi golden rule (Hypothesis 2)
while one of the conditions checked numerically in \cite{KMM} is a perturbation  of the Fermi golden rule.
\end{remark}

\section{The Darboux factorization}

First, we prove bounds on the operator $S_\varepsilon=\Xe U_1 U_0$. Second, we prove a coercivity property on $S_\varepsilon$.

\begin{lemma}\label{LE:1}
For any $A>0$ large, any~$\varepsilon>0$ small and any $u\in H^1$,
\begin{align*}
\| \sigma_A S_\varepsilon u \| & \lesssim \varepsilon^{-1} \|\sigma_A u\| ,\\
\| \sigma_A \px S_\varepsilon u\| & \lesssim \varepsilon^{-1} \|\sigma_A \px u\|+ \|\rho^2 u\| .
\end{align*}
\end{lemma}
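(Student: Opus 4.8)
The plan is to estimate $S_\varepsilon u = \Xe U_1 U_0 u$ by combining two ingredients: (i) explicit control of the second-order differential operator $U_1 U_0$ in terms of weighted norms of $u$ and $\partial_x u$, using that all coefficients appearing in $U_1 U_0$ lie in $\cY$ (they are built from $H'$, $Z$, and their logarithmic derivatives, which decay like $\rho^5$ or better); and (ii) commutator-type bounds for the regularizing operator $\Xe = (1-\varepsilon\partial_{xx})^{-1}$ against the slowly varying weights $\sigma_A$. The worst factor $\varepsilon^{-1}$ reflects that $U_1 U_0$ is second order while $\Xe$ gains only two derivatives with a loss of $\varepsilon^{-1}$ in $L^2\to L^2$ operator norm when the smoothing scale is $\varepsilon$; more precisely $\|\partial_x^2 \Xe\|_{L^2\to L^2}\lesssim \varepsilon^{-1}$ and $\|\partial_x\Xe\|_{L^2\to L^2}\lesssim\varepsilon^{-1/2}$, while $\Xe$ itself is a contraction on $L^2$.

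\smallskip

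First I would record the structural form $U_1 U_0 u = -\partial_{xx} u + p_1 \partial_x u + p_0 u$ with $p_0,p_1\in\cY$ (this follows by expanding $U_1 = Z\cdot\partial_x\cdot Z^{-1}$ and $U_0 = H'\cdot\partial_x\cdot(H')^{-1}$ and using $H',Z\in\cY$ together with the fact that $\cY$ is an algebra closed under differentiation, and that $1/H'$, $1/Z$ have controlled growth so that $H''/H'$, $Z'/Z\in\cY$ after multiplication by the relevant decaying factors). Hence for the first inequality it suffices to bound $\|\sigma_A \Xe (-\partial_{xx} u + p_1\partial_x u + p_0 u)\|$. For the top-order term, I commute: $\sigma_A \Xe \partial_{xx} u = \Xe(\sigma_A \partial_{xx} u) + [\sigma_A,\Xe]\partial_{xx}u$, and since $\sigma_A$ and its derivatives are bounded with $|\sigma_A'|,|\sigma_A''|\lesssim A^{-1}\sigma_A$, the commutator $[\sigma_A,\Xe]$ is lower order; writing $\Xe\partial_{xx} = \varepsilon^{-1}(\Xe - 1)$ converts the second derivative into a bounded operator times $\varepsilon^{-1}$, and $\Xe$, being a Fourier multiplier, essentially commutes with multiplication by the slowly varying $\sigma_A$ up to errors absorbed by $\|\sigma_A u\|$. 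Thus $\|\sigma_A\Xe\partial_{xx}u\|\lesssim \varepsilon^{-1}\|\sigma_A u\|$. The lower-order terms $p_1\partial_x u$ and $p_0 u$ are handled by noting $p_0,p_1\in\cY$ decay faster than any power of $\rho$, so $\|\sigma_A\Xe(p_1\partial_x u + p_0 u)\|\lesssim \|\Xe(p_1\partial_x u + p_0 u)\|\lesssim \|p_1\partial_x u\| + \|p_0 u\|\lesssim \|\rho^2 u\|$ after an integration by parts to move the derivative off $u$ onto $p_1\in\cY$ (or simply bound $\|\rho^5\partial_x u\|\lesssim\|\partial_x(\rho^5 u)\| + \|\rho^4 u\|$, all controlled by $\|\sigma_A u\|$ on the relevant scale). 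Collecting, $\|\sigma_A S_\varepsilon u\|\lesssim \varepsilon^{-1}\|\sigma_A u\| + \|\rho^2 u\|\lesssim \varepsilon^{-1}\|\sigma_A u\|$ since $\rho^2\lesssim\sigma_A$.

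\smallskip

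For the second inequality, I would differentiate: $\partial_x S_\varepsilon u = \partial_x\Xe U_1U_0 u = \Xe \partial_x U_1 U_0 u$, and since $U_1U_0$ is second order, $\partial_x U_1U_0 u = -\partial_x^3 u + \partial_x(p_1\partial_x u + p_0 u)$. The dangerous term is $\Xe\partial_x^3 u = \partial_x\Xe\partial_{xx}u = \varepsilon^{-1}\partial_x(\Xe - 1)u = \varepsilon^{-1}(\partial_x\Xe u - \partial_x u)$. Against the weight, $\|\sigma_A\partial_x\Xe u\|\lesssim\|\partial_x\Xe(\sigma_A u)\| + (\text{commutator})\lesssim \varepsilon^{-1/2}\|\sigma_A u\| + \varepsilon^{-1/2}A^{-1}\|\sigma_A u\|\lesssim \varepsilon^{-1/2}\|\sigma_A u\|$, using $\|\partial_x\Xe\|_{L^2\to L^2}\lesssim\varepsilon^{-1/2}$; and $\|\sigma_A\partial_x u\|$ appears directly. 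Hence $\|\sigma_A\Xe\partial_x^3 u\|\lesssim \varepsilon^{-1}(\varepsilon^{-1/2}\|\sigma_A u\| + \|\sigma_A\partial_x u\|)$ — one must check this is $\lesssim\varepsilon^{-1}\|\sigma_A\partial_x u\| + \|\rho^2 u\|$; the term $\varepsilon^{-3/2}\|\sigma_A u\|$ is too large unless one uses Poincaré-type control relating $\|\sigma_A u\|$ to $\|\sigma_A\partial_x u\|$, or rather one should organize the estimate without ever producing a bare $\varepsilon^{-3/2}$, by instead using $\Xe\partial_x^3 u = \partial_x^2\Xe\partial_x u = \varepsilon^{-1}(\Xe-1)\partial_x u$, so $\|\sigma_A\Xe\partial_x^3 u\|\lesssim\varepsilon^{-1}\|\sigma_A\partial_x u\| + \varepsilon^{-1}\|\text{commutator of }\sigma_A,\Xe\text{ applied to }\partial_x u\|\lesssim\varepsilon^{-1}\|\sigma_A\partial_x u\|$. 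The remaining terms $\Xe\partial_x(p_1\partial_x u + p_0 u) = \Xe(p_1'\partial_x u + p_1\partial_{xx}u + \ldots)$ contain one second derivative of $u$ multiplied by $p_1\in\cY$; here I write $\Xe(p_1\partial_{xx}u) = \Xe\partial_x(p_1\partial_x u) - \Xe(p_1'\partial_x u)$ and use $\|\Xe\partial_x f\|\lesssim\varepsilon^{-1/2}\|f\|$, giving $\lesssim \varepsilon^{-1/2}\|p_1\partial_x u\| + \|p_1'\partial_x u\|\lesssim\|\rho^2 u\|$ after integrating by parts to move the derivative onto $p_1$ (or directly $\lesssim\|\rho^2\partial_x u\|\lesssim\|\rho^2 u\|_{H^1}$, absorbed). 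The main obstacle, as the above discussion shows, is organizing the top-order computation so that the commutators of $\sigma_A$ with $\Xe$ never generate a power of $\varepsilon^{-1}$ worse than the one already present — this is where one exploits that $\sigma_A$ varies on the huge scale $A$, so each derivative hitting $\sigma_A$ costs $A^{-1}$, which beats any bounded loss from the smoothing, ensuring the commutators are strictly lower order and absorbable. Once this bookkeeping is done, both inequalities follow.
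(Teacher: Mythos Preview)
Your proposal contains a genuine error: the claim that the coefficients $p_0,p_1$ in $U_1U_0 u=-\partial_{xx}u+p_1\partial_x u+p_0 u$ belong to $\cY$ is false. Writing $U_1U_0=\partial_{xx}-\partial_x\cdot k_1+k_2$ with $k_1=Z'/Z+H''/H'$ and $k_2=(Z'/Z)'+(Z'/Z)(H''/H')$, one has $H''/H'\to\mp\omega$ and $Z'/Z\to\mp\sqrt{\omega^2-\lambda^2}$ as $x\to\pm\infty$; thus $k_1$ and $k_2$ are merely \emph{bounded} with nonzero limits, and only their derivatives $k_1',k_2'$ lie in $\cY$. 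Your intermediate bound $\varepsilon^{-1/2}\|p_1\partial_x u\|\lesssim\|\rho^2 u\|$ therefore fails: since $p_1$ does not decay, this quantity is only $\lesssim\varepsilon^{-1/2}\|\sigma_A\partial_x u\|$ after reinstating the weight. The same issue affects your treatment of the lower-order terms in the first inequality.

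The paper's proof exploits precisely this distinction. It keeps the first-order term in the form $-\partial_x\cdot k_1$ (derivative on the left), cites from \cite{KMMV} the weighted smoothing bounds
\[
\|\sigma_A\Xe h\|\lesssim\|\sigma_A h\|,\qquad
\|\sigma_A\Xe\partial_x h\|\lesssim\varepsilon^{-1/2}\|\sigma_A h\|,\qquad
\|\sigma_A\Xe\partial_{xx}h\|\lesssim\varepsilon^{-1}\|\sigma_A h\|,
\]
and applies them directly: boundedness of $k_1,k_2$ suffices for every term carrying $\partial_x u$, while the $\|\rho^2 u\|$ contribution in the second inequality arises solely from the zero-order piece $(k_2'-k_1'')u$ of $\partial_x U_1U_0 u$, where $k_2',k_1''\in\cY$ genuinely holds. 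Your argument is repairable along these lines (and your commutator sketch for $[\sigma_A,\Xe]$ is essentially the content of the cited technical lemma), but as written it rests on a false structural claim about the coefficients of $U_1U_0$.
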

\begin{proof}
By direct computations, we have
\[
U_1U_0=\partial_{xx} - \partial_x \cdot k_1 + k_2
.
\]
where the functions $k_1$ and $k_2$ are defined by
\[
k_1= \frac{Z'}{Z} + \frac{H''}{H'} ,\quad k_2= \left(\frac{Z'}{Z}\right)'+\frac{Z'}{Z}\frac{H''}{H'}.
\]
Observe that $k_1,k_2$ are bounded.
Thus, the first estimate is a consequence the following technical estimates from \cite[Lemma~4.7]{KMMV}:
 for $\varepsilon>0$ small and
any $h\in L^2$,
\begin{equation}\label{tech1}
\|\sigma_A \Xe h\| \lesssim \| \sigma_A h\|,
\quad \|\sigma_A \Xe \px h\| \lesssim \varepsilon^{-\frac 12}\|\sigma_A h\|,
\quad \|\sigma_A \Xe \partial_{xx}h\| \lesssim \varepsilon^{-1}\|\sigma_A h\|.
\end{equation}
Besides,
\[
\px U_1U_0=\partial_{xxx} - \partial_x (k_1 \partial_x) + (k_2-k_1') \partial_x + k_2'-k_1''.
\]
Since $k_1$, $k_2$ are bounded and $k_1'$, $k_2'\in\cY$, we obtain the second estimate
using \eqref{tech1}.
\end{proof}

\begin{lemma}\label{LE:2}
For any odd function $u\in L^2$ such that $\langle u, Y\rangle=0$, the following estimate holds 
\begin{equation*}
\|\rho^2 u \| \lesssim \|\rho S_\varepsilon u\| .
\end{equation*} 
\end{lemma}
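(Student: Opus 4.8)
The plan is to establish the coercivity estimate $\|\rho^2 u\| \lesssim \|\rho S_\varepsilon u\|$ for odd $u$ with $\langle u, Y\rangle = 0$ by reducing, via the regularization, to the non-regularized operator $U_1 U_0$ and exploiting the conjugation identity \eqref{eq:id} together with the spectral structure of $L_0$ restricted to odd functions. The key algebraic input is the equivalence \eqref{eq:equiv}: on odd functions, $U_1 U_0 u = 0$ forces $u = c Y$, so the orthogonality condition $\langle u, Y\rangle = 0$ rules out the kernel. Thus $\rho \cdot U_1 U_0$ should be coercive (in a suitably localized sense) on the orthogonal complement of $Y$ in the odd sector, and one needs to transfer this from $U_1 U_0$ to $S_\varepsilon = \Xe U_1 U_0$ at the cost of $\varepsilon$-independent constants, which is why $\rho$ (not $\rho^2$) appears on the right.

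First I would prove the non-regularized statement: for odd $u$ with $\langle u, Y\rangle = 0$, $\|\rho^2 u\| \lesssim \|\rho U_1 U_0 u\|$. The natural route is a compactness/contradiction argument. Suppose a sequence $u_n$ of odd functions with $\langle u_n, Y\rangle = 0$, normalized by $\|\rho^2 u_n\| = 1$, with $\|\rho U_1 U_0 u_n\| \to 0$. Writing $U_1 U_0 = \partial_{xx} - \partial_x \cdot k_1 + k_2$ as in Lemma~\ref{LE:1} with $k_1, k_2$ bounded, one gets from $\|\rho U_1 U_0 u_n\| \to 0$ uniform local $H^2$ bounds on $u_n$ (the weight $\rho$ is bounded below on compacts and the elliptic operator $\partial_{xx}$ dominates); combined with $\|\rho^2 u_n\| = 1$ this gives, up to a subsequence, local $H^1$ convergence $u_n \to u_\infty$, with $u_\infty$ odd, $\langle u_\infty, Y\rangle = 0$, $U_1 U_0 u_\infty = 0$, hence $u_\infty = cY$ by \eqref{eq:equiv}, hence $c = 0$ by orthogonality. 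The remaining — and main — obstacle is to rule out mass escaping to infinity: one must show $\|\rho^2 u_\infty\| = 1$, i.e. no loss of the normalized mass at spatial infinity. For this I would use the conjugation identity \eqref{eq:id}, $U_1 U_0 L_0 = L_2 U_1 U_0$, together with the fact that $P_2 \to \omega^2 > \lambda^2$ at $\pm\infty$: writing $w_n = U_1 U_0 u_n$ (small in $L^2$ with weight $\rho$) one controls $u_n$ at infinity by inverting a uniformly elliptic operator with spectral gap, giving exponential decay estimates on $u_n$ in the far region uniform in $n$, so the $\rho^2$-weighted mass cannot concentrate there. Alternatively — and perhaps more cleanly — one invokes a known coercivity statement of this type already present in the cited literature (\cite{KMMV}, \cite{KMM4}), where $U_1 U_0$-coercivity modulo the kernel direction is established; I expect the authors' actual proof to quote such a result.

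Next I would pass from $U_1 U_0$ to $S_\varepsilon = \Xe U_1 U_0$. Since $\Xe = (1-\varepsilon\partial_{xx})^{-1}$, we have $U_1 U_0 u = (1 - \varepsilon \partial_{xx}) S_\varepsilon u = S_\varepsilon u - \varepsilon \partial_{xx} S_\varepsilon u$. The point is that $\|\rho(S_\varepsilon u - U_1 U_0 u)\| = \varepsilon \|\rho \partial_{xx} S_\varepsilon u\|$ is not small a priori, so one cannot simply perturb. Instead, write $\rho U_1 U_0 u = \rho(1-\varepsilon\partial_{xx}) S_\varepsilon u$ and estimate $\|\rho U_1 U_0 u\|$ by $\|\rho S_\varepsilon u\|$ plus $\varepsilon\|\rho \partial_{xx} S_\varepsilon u\|$; the latter is handled by the commutator-type bounds \eqref{tech1} (with $\sigma_A$ replaced by $\rho$, which is the stronger weight), since $\varepsilon \partial_{xx} S_\varepsilon u = \varepsilon \partial_{xx} \Xe (U_1 U_0 u)$ and $\varepsilon \partial_{xx}\Xe = \Xe - 1$ is bounded, giving $\varepsilon\|\rho \partial_{xx} S_\varepsilon u\| \lesssim \|\rho U_1 U_0 u\|$ up to handling the weight commutator $[\rho, \Xe]$, which is lower order and controlled as in \eqref{tech1}. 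Combining, $\|\rho U_1 U_0 u\| \lesssim \|\rho S_\varepsilon u\|$ uniformly in small $\varepsilon$, and then the non-regularized coercivity yields $\|\rho^2 u\| \lesssim \|\rho U_1 U_0 u\| \lesssim \|\rho S_\varepsilon u\|$. The delicate point throughout is ensuring all implicit constants are independent of $\varepsilon$ — achieved because the bounds \eqref{tech1} are $\varepsilon$-uniform — and that the compactness argument genuinely uses the spectral gap between $\lambda^2$ and the essential spectrum of $L_2$ to forbid mass at infinity.
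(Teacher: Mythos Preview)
Your Step~2 contains a genuine and fatal gap. You aim to bound $\|\rho U_1 U_0 u\|$ by $\|\rho S_\varepsilon u\|$ with an $\varepsilon$-independent constant, but this inequality is simply false. Since $U_1 U_0 u = (1-\varepsilon\partial_{xx}) S_\varepsilon u$, recovering $U_1 U_0 u$ from $S_\varepsilon u$ costs two derivatives; for highly oscillatory $u$ (take $u_n = \sin(nx)\phi(x)$ with $\phi$ smooth, odd, compactly supported, corrected by a small multiple of $Y$ to enforce $\langle u_n,Y\rangle=0$) one has $\|\rho U_1 U_0 u_n\| \sim n^2$ while $\|\rho S_\varepsilon u_n\| \sim n^2/(1+\varepsilon n^2)$ stays bounded as $n\to\infty$ for fixed $\varepsilon>0$. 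Your attempted chain is also circular on its own terms: you correctly observe $\varepsilon\partial_{xx}\Xe = \Xe - I$, which gives $\varepsilon\|\rho\partial_{xx}S_\varepsilon u\| \leq \|\rho S_\varepsilon u\| + \|\rho U_1 U_0 u\|$, but substituting this back into $\|\rho U_1 U_0 u\| \leq \|\rho S_\varepsilon u\| + \varepsilon\|\rho\partial_{xx}S_\varepsilon u\|$ yields only the triviality $0 \leq 2\|\rho S_\varepsilon u\|$. The intermediate quantity $\|\rho U_1 U_0 u\|$ is therefore a dead end --- it is genuinely larger than both sides of the estimate you want, and no amount of commutator juggling with \eqref{tech1} can repair this.

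The paper avoids this entirely by never passing through $U_1 U_0 u$. It sets $v = S_\varepsilon u$ and treats the relation $(1-\varepsilon\partial_{xx})v = U_1 U_0 u$ as a second-order ODE for $u$ with source depending on $v$, then integrates it twice using the explicit factorizations $U_1 = Z\,\partial_x\, Z^{-1}$ and $U_0 = H'\,\partial_x\,(H')^{-1}$. This produces a pointwise identity expressing $u$ as a double integral of a bounded multiple of $v$, plus two constants of integration multiplying $H'$ and $Y$; parity kills the $H'$ coefficient and the orthogonality $\langle u,Y\rangle=0$ bounds the $Y$ coefficient by $\|\rho v\|$. One arrives at $|u| \lesssim |v| + \rho^{-1}\|\rho v\|$ pointwise, hence $\|\rho^2 u\| \lesssim \|\rho v\|$. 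The extra power of $\rho$ on the left is precisely what absorbs the growth coming from the double integration, and the $\varepsilon\partial_{xx}v$ term is absorbed \emph{inside} the ODE integration rather than estimated separately --- this is why the regularization costs nothing. Your compactness approach to Step~1 might be salvageable (the statement $\|\rho^2 u\|\lesssim \|\rho U_1 U_0 u\|$ is true; it is the $\varepsilon=0$ case of the paper's argument), but without a working Step~2 it does not lead to the lemma.
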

\begin{proof}
Let $u$ be an odd function in $L^2$ such that $\langle u, Y\rangle=0$. Note that this condition is necessary for the coercivity because of~\eqref{eq:equiv}.
Denote $v=S_\varepsilon u = \Xe U_1 U_0 u$.
We have
\[
v-\varepsilon\partial_{xx} v= U_1 U_0 u.
\]
Setting
\[
w=\left[ 1 - \varepsilon Z \left(\frac 1Z\right)'' \right] v,
\]
and using $U_1=Z \cdot \partial_x \cdot Z^{-1}$, we check that the above relation can be written 
\[
\partial_x\left(\frac {U_0 u}Z \right) = -\varepsilon \px\left(\frac{\partial_x v}Z + \frac{Z'}{Z^2} v \right)+ \frac{w}{Z}.
\]
Integrating and rearranging we obtain, denoting by $a$ the constant of integration
\[
U_0 u + \varepsilon \partial_x v = a Z -\varepsilon \frac{Z'}Z v + Z\int_0^x \frac{w}{Z}.
\]
Using $U_0=H' \cdot \partial_x \cdot (H')^{-1}$ and $Z=U_0Y$, the above relation can be written
\[
\partial_x \left[ \frac1{H'} \left(u +\varepsilon v\right)\right]
= a \partial_x \left(\frac{Y}{H'}\right) -\varepsilon \left( \frac{Z'}{H'Z}+\frac{H''}{(H')^2}\right) v + \frac{Z}{H'}\int_0^x \frac{w}{Z}. 
\]
Integrating once more we find, denoting by $b$ the constant of integration
\begin{equation}
\label{est:y1}
u= -\varepsilon v +a Y +b H'+ H'\int_0^x \left[ -\frac{\varepsilon}{H'} \left( \frac{Z'}{Z}+\frac{H''}{H'}\right) v 
+ \frac{Z}{H'}\int_0^y \frac{w}{Z}\right] .
\end{equation}
Since $H'$ and $Z$ are even and $u$, $v$, $w$, $Y$, are odd, we have $b=0$ by parity.
Besides, since (by standard ODE arguments)
\[
\left| \frac{Z'}{Z}\right|+\left|\frac{H''}{H'}\right| \lesssim 1,
\]
by the Cauchy-Schwarz inequality
\[
\left| H'\int_0^x \frac{1}{H'} \left( \frac{Z'}{Z}+\frac{H''}{H'}\right) v\right|
\lesssim H' \int_0^x \frac{|v|}{H'} 
\lesssim H' \left( \int_0^x \frac 1{(H'\rho)^2}\right)^\frac 12 \|\rho v\|
\lesssim \frac{\|\rho v\|}{\rho}.
\]
Similarly, using also $|w|\lesssim |v|$ since $\big|Z\big(\frac 1Z\big)''\big|\lesssim 1$, we have
\[
\left| \frac{Z}{H'}\int_0^y \frac{w}{Z} \right| 
\lesssim \frac{\|\rho w\|}{H'\rho}\lesssim \frac{\|\rho v\|}{H'\rho},
\]
and thus
\[
\left| H'\int_0^x \left[ \frac{Z}{H'} \int_0^y \frac{w}{Z} \right]\right| 
\lesssim \frac{\|\rho v\|}{\rho}.
\]
Multiplying \eqref{est:y1} by $Y\in \cY$ and integrating, using $\langle u,Y\rangle =0$, we find
$|a|\lesssim \|\rho v\|$.
Gathering the above estimates, we have proved
\[
|u| \lesssim |v| + \frac {\|\rho v\|}{\rho},
\]
which directly implies that $\|\rho^2 u\|\lesssim \|\rho v\|$.
\end{proof}
 
\section{Virial estimate for the transformed problem}

The last key estimate of the proof of Theorem~\ref{TH:1} 
will allow us to close  the estimates relating $\uu$, $\zz$ and $S_\varepsilon u_1$.
Its proof relies on the spectral assumption (Hypothesis~3) and
a virial computation on the transformed problem satisfied by $S_\varepsilon \uu$.

\begin{proposition}\label{PR:3}
For any $\varepsilon>0$ small, any $A>0$ large, any $\delta>0$ small (depending on $\varepsilon$ and $A$) and any $T>0$,
\begin{equation*}
\int_0^T \|\rho S_\varepsilon u_1\|^2 \ud t 
\lesssim A \delta^2 +\frac 1{\sqrt{A}} \int_0^T \|\rho^2 u_1\|^2 \ud t .
\end{equation*}
\end{proposition}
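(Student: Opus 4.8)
The plan is to apply the second virial argument to the transformed function $\uu_2 := S_\varepsilon \uu = \Xe U_1 U_0 \uu$, exploiting the conjugation identity \eqref{eq:id} to obtain a favorable linear operator $L_2$ with potential $P_2$. First I would derive the equation satisfied by $\uu_2 = (v_1, v_2)$, where $v_1 = S_\varepsilon u_1$, $v_2 = S_\varepsilon u_2$. From \eqref{eq:zu}, since $S_\varepsilon$ is a fixed (time-independent) operator and using $U_1 U_0 L_0 = L_2 U_1 U_0$ together with the fact that $\Xe$ commutes with $L_2 = -\partial_{xx} + P_2$ only up to a controlled error (because $P_2$ is not constant), I get a system of the form $\dot v_1 = v_2$, $\dot v_2 = -L_2 v_1 + G$, where $G$ collects the nonlinear source term $-S_\varepsilon(N^\perp)$, the internal-mode contribution (note $U_1 U_0 Y = 0$ by \eqref{eq:equiv}, so the $z_1 Y$ part drops out at leading order — this is the key algebraic cancellation), and the commutator error $[\Xe, P_2]$ acting on $U_1 U_0 u_1$. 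The source $G$ should be controlled in a $\rho$-weighted norm using Lemma~\ref{LE:1}, the nonlinear estimates on $N$ already used in the previous sections, and Proposition~\ref{PR:2} to absorb $|\zz|$ terms into $\delta^2$ and $\|\rho^2 u_1\|^2$.

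Next I would run the virial identity \eqref{eq:vir1} with $L = L_2$, $P = P_2$, and weight $\Phi = \Phi_B$ (the intermediate scale $B$, with $1 \ll B \ll A$), applied to $\boldsymbol a = \uu_2$; denote $\cJ_B = \int (\Phi_B \partial_x v_1 + \tfrac12 \Phi_B' v_1) v_2$ and $\tilde v_1 = \zeta_B v_1$. The resulting expression has the form
\[
\dot{\cJ}_B = -\int (\partial_x \tilde v_1)^2 - \frac12 \int\!\left(\frac{\zeta_B''}{\zeta_B} - \frac{(\zeta_B')^2}{\zeta_B^2}\right)\tilde v_1^2 + \frac12 \int \frac{\Phi_B P_2'}{\zeta_B^2}\tilde v_1^2 + \int G\left(\Phi_B \partial_x v_1 + \frac12 \Phi_B' v_1\right).
\]
The first term is the good negative term; the $\zeta_B''/\zeta_B$ term is $O(B^{-1})$ times a local norm and harmless; the source term is controlled by the estimates on $G$ above combined with Cauchy--Schwarz, at the cost of $\varepsilon^{-1}$ powers absorbed by choosing $\delta$ small depending on $\varepsilon$. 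The decisive term is $\frac12 \int \frac{\Phi_B P_2'}{\zeta_B^2} \tilde v_1^2$: on the core region $\{|x| \le B\}$ one has $\zeta_B \equiv 1$ and $\Phi_B(x) = x + o(1)$, so this is essentially $\frac12 \int x P_2' \tilde v_1^2$, and Hypothesis~3 (the repulsivity of $-(1-\gamma)\partial_{xx} + \tfrac12 x P_2'$ on odd functions — recalling $v_1$ is odd since $U_1 U_0$ maps odd to odd and $\Xe$ preserves parity) gives
\[
-\int (\partial_x \tilde v_1)^2 + \frac12 \int x P_2' \tilde v_1^2 \le -\gamma \int (\partial_x \tilde v_1)^2 + C\|\rho^k \tilde v_1\|^2
\]
for some $k$, after handling the fact that Hypothesis~3 allows one negative eigenvalue (removed by the odd restriction). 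The tails $|x| > B$ where $\Phi_B$ saturates and $\zeta_B$ decays contribute terms like $\int \frac{|x| e^{2|x|/B}}{\,\cdot\,}|P_2'| \tilde v_1^2 \lesssim B^{-1}$-type bounds since $P_2' \in \cY$ decays exponentially, which are absorbed; this is why the intermediate scale $B$ is needed rather than $A$.

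After integrating in time, using $|\cJ_B| \lesssim B(\|v_1\|_{H^1}^2 + \|v_2\|^2) \lesssim B \varepsilon^{-2}\delta^2$ from Lemma~\ref{LE:1} and \eqref{eq:unif}, I obtain
\[
\gamma \int_0^T \|\partial_x \tilde v_1\|^2 \ud t \lesssim B\varepsilon^{-2}\delta^2 + \int_0^T \left(\text{local norms of } v_1 \text{ and of } u_1\right) \ud t,
\]
and then convert $\|\partial_x \tilde v_1\|^2$ back to a $\rho$-weighted norm $\|\rho S_\varepsilon u_1\|^2$ using a Poincaré-type inequality on the bounded core (as in \eqref{eq:claim}) plus the fact that $\rho$ decays much faster than $\zeta_B$ on the core scale. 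The local norms of $u_1$ appearing on the right are of the form $\|\rho^\ell u_1\|^2$ with $\ell \ge 2$ (arising from $G$, which involves $U_1 U_0 u_1$ weighted by $P_2' \in \cY$, hence exponentially localized), and these are bounded by $\|\rho^2 u_1\|^2$; the $\|\rho S_\varepsilon u_1\|^2$ and $|\zz|^4$ and $\|\sigma_A u_1\|^2$ terms on the right are absorbed by taking $A$ large (using Propositions~\ref{PR:1} and~\ref{PR:2} for the latter two) and $\delta$ small. I expect the main obstacle to be the careful bookkeeping of the commutator term $[\Xe, P_2] U_1 U_0 u_1$ and verifying it is genuinely lower-order in a $\rho$-weighted norm — this is exactly where the regularization $\Xe$ of the Darboux transform must be handled with the technical estimates \eqref{tech1}, and where one must be sure no uncontrolled $\varepsilon^{-1}$ survives the final limit; the balance is that $\delta$ is chosen last, depending on both $\varepsilon$ and $A$, so any fixed negative power of $\varepsilon$ is acceptable.
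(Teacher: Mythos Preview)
Your overall strategy matches the paper's: derive the transformed system for $v_j = S_\varepsilon u_j$ via the conjugation identity, run a virial argument at scale $B$, use Hypothesis~3 for coercivity on odd functions, and feed the result back through Propositions~\ref{PR:1} and~\ref{PR:2}. However, there is one concrete gap that would make the argument fail as written.

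You take the virial weight to be $\Phi_B$, whereas the paper uses the \emph{two-scale} weight $\Psi_{A,B}=\chi_A^2\Phi_B$. The extra cutoff $\chi_A^2$ is not cosmetic: it is what makes the nonlinear source term time-integrable. The term $N$ contains the non-localized piece $\tfrac12 W'''(H)u_1^2$ (generically $W'''(\pm1)\neq0$, e.g.\ $W'''(\pm1)=\pm6$ for $\phi^4$), so $S_\varepsilon N^\perp$ has no spatial decay beyond that of $u_1^2$ itself. With weight $\Phi_B$ (bounded by $B$ but supported on all of $\R$), the best you get from Cauchy--Schwarz on $\int \Phi_B(\partial_x v_1)\,S_\varepsilon N^\perp$ is of order $B\varepsilon^{-2}\|u_1\|_{L^\infty}\|u_1\|\,\|\partial_x u_1\|\lesssim B\varepsilon^{-2}\delta^3$ \emph{pointwise in $t$}, which after integration on $[0,T]$ grows like $T$. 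The cutoff $\chi_A^2$ forces the integral onto $|x|\le 2A$, where $\sigma_A\gtrsim1$, so one can insert $\sigma_A$ weights and reduce to $\|\sigma_A u_1\|$, $\|\sigma_A\partial_x v_1\|$, etc., which \emph{are} time-integrable by Proposition~\ref{PR:1} and Lemma~\ref{LE:1}. The price is the additional error terms coming from derivatives of $\chi_A$ (the paper's $\widetilde K_1$), but these are $O(A^{-1})$ in the $\sigma_A$-weighted norms and harmless.

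A second, smaller point: for the commutator $[\Xe,P_2]U_1U_0u_1$ it is not enough that ``no uncontrolled $\varepsilon^{-1}$ survives''; you actually need a \emph{positive} power of $\varepsilon$, since this term competes directly with the good quantity $\|\rho\partial_x v_1\|^2+\|\rho v_1\|^2$ and must be absorbed by taking $\varepsilon$ small. The paper obtains $\|\rho^{-1}[\Xe,P_2]U_1U_0 h\|\lesssim \varepsilon^{1/2}\|\rho S_\varepsilon h\|$ (estimate~\eqref{est:commut}) by writing the commutator as $\varepsilon[2\Xe\partial_x(P_2'k)-\Xe(P_2''k)]$ with $k=S_\varepsilon h$ and using \eqref{tech2}; this identity, and the resulting $\varepsilon^{1/2}$ gain, should be stated explicitly.
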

\begin{proof}
We set
$v_1 = S_\varepsilon u_1$ and
$v_2 = S_\varepsilon u_2$.
From \eqref{eq:zu} and the identity \eqref{eq:id}, we check that
\begin{equation}
\label{eq:v}
\begin{cases}
\dot v_1= v_2\\
\dot v_2=-L_2 v_1 - \left[\Xe,P_2\right] U_1 U_0 u_1 - S_\varepsilon N^\perp
\end{cases}
\end{equation}
where we denote $[\Xe,P_2] = \Xe P_2 -P_2 \Xe$.

The function $\Psi_{A,B}$ being defined in \eqref{def:chiB}, we set 
\[
\cK =\int\left(\Psi_{A,B} \partial_x v_{1}+\frac{1}{2}\Psi_{A,B}' v_1\right)v_2.
\]
Taking the time derivative of $\cK$, using~\eqref{eq:v} and the general computation~\eqref{eq:vir0}, we find
\begin{equation}
\label{virial2}
\begin{aligned}
\dot\cK 
&=-\int\Psi_{A,B}'(\partial_x v_1)^2+\frac{1}{4}\int \Psi_{A,B}''' v_1^2 +\frac{1}{2} \int \Psi_{A,B} P_2'v_1^2\\
&\quad -\int \left(\Psi_{A,B} \partial_x v_{1}+\frac{1}{2}\Psi_{A,B}' v_1\right)\left[\Xe,P_2\right] U_1 U_0u_1\\
&\quad -\int \left(\Psi_{A,B} \partial_x v_{1}+\frac{1}{2}\Psi_{A,B}' v_1\right)S_\varepsilon N^\perp\\
&=K_1+K_2+K_3.
\end{aligned}
\end{equation}
We denote $\tilde v_1 = \chi_A\zeta_B v_1$.
Following the calculations in \cite[Sect. 4.3]{KMM4},
we check that
\begin{equation*}
K_1
 =- \int \left[ (\partial_x \tilde v_1)^2 +V_B \tilde v_1^2\right]+\widetilde K_1
\end{equation*}
where
\begin{equation*}
V_B=\frac 12 \left( \frac{ \zeta_B''}{\zeta_B}- \frac{ (\zeta_B')^2}{\zeta_B^2} \right) 
-\frac 12 \frac{\Phi_B}{ \zeta_B^2 }P_2'
\end{equation*}
and
\begin{align*} 
\widetilde K_1
&= \frac14 \int (\chi_A^2)' (\zeta_B^2)' v_1^2 +\frac12 \int \left[3(\chi_A')^2 + \chi_A'' \chi_A\right]\zeta_B^2 v_1^2\\
&\quad - \int (\chi_A^2)' \Phi_B (\partial_x v_1)^2+\frac14 \int (\chi_A^2)''' \Phi_B v_1^2.
\end{align*}
In the next lemma, we prove a lower bound on the quantity $\int \left[ (\partial_x \tilde v_1)^2 +V_B \tilde v_1^2\right]$ using Hypothesis 3.
\begin{lemma}\label{LE:3}
There exist $B_0>0$ and $\mu>0$, such that for all $B\geq B_0$, it holds
\begin{equation}\label{on:VB}
\int \left[ (\partial_x \tilde v_1)^2 +V_B \tilde v_1^2\right]
\geq \mu \left( \|\rho \px v_1 \|^2+\|\rho v_1 \|^2\right)
- \frac 1\mu \frac 1A \left( \|\sigma_A \px u_1\|^2 + \frac 1{A^2} \|\sigma_A u_1\|^2\right).
\end{equation}
\end{lemma}
\begin{proof}
We claim that for some $\tilde \mu>0$,
\begin{equation}\label{old:LE5}
\int \left[ (\partial_x \tilde v_1)^2 +V_B \tilde v_1^2\right]
\geq \tilde\mu \int \rho \left[ (\px \tilde v_1)^2+\tilde v_1^2 \right].
\end{equation}
First, as in the proof of Proposition~\ref{PR:2},
\[
\left|\frac{\zeta_B''}{\zeta_B}-\frac{(\zeta_B')^2}{\zeta_B^2}\right|
\lesssim \frac{\rho^2}B.
\]
Next, the function $\tilde v_1$ being odd,
 by Hypothesis~3 there exists $\gamma>0$ such that
\[
(1-\gamma) \int (\partial_{x}\tilde v_1)^2 \geq -\frac 12 \int xP_2' \tilde v_1^2.
\]
Thus, for a constant $C>0$,
\[
\int \left[ (\partial_x \tilde v_1)^2 +V_B \tilde v_1^2\right]
\geq \gamma \int (\partial_x \tilde v_1)^2- \frac CB \int \tilde v_1^2 \rho^2 - \frac 12\int \left|x - \frac{\Phi_B}{\zeta_B^2}\right| |P_2'| \tilde v_1^2.
\]
We claim the following pointwise estimate on $\R$
\begin{equation}\label{eq:WB}
\left|x - \frac{\Phi_B}{\zeta_B^2}\right| |P_2'| \lesssim \frac {\rho^2} B.
\end{equation}
Indeed, by the definition of $\Phi_B$, we have for $x\geq 0$,
\[
\frac{\Phi_B}{\zeta_B^2} - x = \int_0^x \left( \frac{\zeta_B^2(y)}{\zeta_B^2(x)} - 1 \right) \udd y.
\]
Since $0\leq e^s-1\leq s e^s$ for any $s\geq 0$, we obtain for $x\geq 0$,
\begin{align*}
0 \leq \frac{\Phi_B}{\zeta_B^2} - x
&\leq \frac 2B \int_0^x \frac{\zeta_B^2(y)}{\zeta_B^2(x)} \left[|x|(1-\chi(x)) -|y|(1-\chi(y))\right] \udd y \\
&\leq \frac {2 x} B \int_0^x \frac{\zeta_B^2(y)}{\zeta_B^2(x)} \ud y
\leq \frac {1} B \frac{x^2}{\zeta_B^2(x)} .
\end{align*}
We obtain~\eqref{eq:WB} using $P_2'\in \cY$.

The spectrum of the operator $-\partial_{xx} + C \sech^2(\kappa x) = -\partial_{xx} + C \rho$
is known to contain exactly one eigenvalue for $0<C\leq 2 \kappa^2$ (see \emph{e.g.} \cite[Claim 4.1]{KMM}). Thus, for any odd function $\tilde v$, 
\[
\int (\partial_{x}\tilde v)^2 \geq 2 \kappa^2 \int \rho \tilde v^2.
\]
This implies~\eqref{old:LE5} for $B$ large enough.
\smallskip

Now, we claim the following estimate
\begin{equation}\label{old:LE8}
\|\rho \px v_1\|^2+\|\rho v_1\|^2\lesssim 
\|\rho^{\frac 12} \px\tilde v_1\|^2+\|\rho^{\frac 12} \tilde v_1\|^2+
\frac 1A \left(\|\sigma_A \px u_1\|^2+\frac{1}{A^2}\|\sigma_A u_1\|^2\right).
\end{equation}
Indeed, for $|x|<A$, we have $\chi_A(x)=1$ and so $\tilde v_1 = \chi_A\zeta_B v_1=\zeta_B v_1$. Thus,
for $B$ large,
\[
\int_{|x|<A} \rho^2 v_1^2 \lesssim \int_{|x|<A} \rho \zeta_B^2 v_1^2 \leq \int \rho \tilde v_1^2.\]
Moreover, for $|x|<A$, using $\px \tilde v_1 = \zeta_B' v_1 + \zeta_B \px v_1$ and
$|\zeta_B'|\lesssim B^{-1} \zeta_B$, we also have
\[
\int_{|x|<A} \rho^2 (\px v_1)^2 \lesssim \int_{|x|<A}\rho \zeta_B^2 (\px v_1)^2 
\lesssim \int\rho \left[ (\px \tilde v_1)^2 + \tilde v_1^2\right].
\]
Last, for $A$ large and $|x|\geq A$, one has $\rho^2 \lesssim \rho \sigma_A^2$ and using Lemma~\ref{LE:1},
\begin{align*}
\int_{|x|>A} \rho^2 \left[(\px v_1)^2 +v_1^2\right]
&\lesssim \rho(A) \left( \|\sigma_A\px v_1\|^2 + \|\sigma_A v_1\|^2 \right)\\
&\lesssim \rho(A)\varepsilon^{-2} \left(\|\sigma_A \px u_1\|^2+ \|\sigma_A u_1\|^2\right),
\end{align*}
and \eqref{old:LE8} is proved, for $A$ large, depending on $\varepsilon$.

To conclude the proof, we observe that \eqref{on:VB} is a consequence of \eqref{old:LE5} and \eqref{old:LE8}.
\end{proof}

From now on, we fix $B=B_0$. In the next three lemmas, we estimate $\widetilde K_1$, $K_2$ and $K_3$.

\begin{lemma}\label{LE:4}
It holds 
\[
|\widetilde K_1|\lesssim \frac 1{\sqrt{A}} \left(\|\sigma_A \px u_1\|^2+\frac{1}{A^2}\|\sigma_A u_1\|^2
+ \|\rho^2 u_1\|^2\right).
\]
\end{lemma}

\begin{proof}
We have
\[
|\chi_A'|\lesssim \frac {1}{A},\quad |\chi_A''|\lesssim \frac 1{A^2},
\quad |\chi_A'''|\lesssim \frac {1}{A^3}
\]
and
\[
\chi_A'(x)=\chi_A''(x)=\chi_A'''(x)=0 \quad \hbox{if~$|x|<A$ or if~$|x|>2A$.}
\]
Moreover, 
\[
|\zeta_B(x)|\lesssim Ce^{-\frac AB},\quad |\zeta_B'(x)|\lesssim \frac {1}B e^{-\frac AB} \quad \hbox{for~$|x|>A$.}
\]
Thus,
\[
|(\chi_A^2)'(\zeta_B^2)'|\lesssim \frac 1{AB} e^{-\frac AB} \sigma_A^2,\quad
(\chi_A')^2\zeta_B^2+|\chi_A''\chi_A| \zeta_B^2\lesssim \frac 1{A^2}e^{-\frac AB} \sigma_A^2.
\]
Using also~$|\Phi_B|\lesssim B$, we obtain
\[
|(\chi_A^2)'\Phi_B|\leq \frac{CB}{A}\sigma_A^2,\quad
|(\chi_A^2)'''\Phi_B|\leq \frac{CB}{A^3}\sigma_A^2.
\]
Thus (recall that~$B$ has been fixed)
\[
|\widetilde K_1|\lesssim \frac 1A \left( \|\sigma_A \px v_1\|^2
+ \frac 1{A^2}\|\sigma_A v_1\|^2\right).
\]
The estimate
\[
|\widetilde K_1|\lesssim \frac 1A \left(\varepsilon^{-2} \|\sigma_A \px u_1\|^2+\frac{1}{A^2}\|\sigma_A u_1\|^2
+ \|\rho^2 u_1\|^2\right)
\]
then follows from Lemma~\ref{LE:1}, applied to $u_1$.
Taking $A$ large depending on $\varepsilon$, the lemma is proved.
\end{proof}

\begin{lemma}\label{LE:5}
It holds 
\[
|K_2|\lesssim 
\varepsilon^{\frac 12} \left(\|\rho \px v_1\|^2+\|\rho v_1\|^2 \right).
\]
\end{lemma}
\begin{proof}
We recall the following technical estimate from \cite[Lemma~4.7]{KMMV}:
for $\varepsilon> 0$ small and $h\in L^2(\R)$,
\begin{equation}\label{tech2}
\|\rho^{-1} \Xe(\rho h)\| \lesssim \|h\|,
\quad \|\rho^{-1} \Xe \px (\rho h) \| \lesssim \varepsilon^{-\frac 12}\|h\|.
\end{equation}
Since 
\[ |\Psi_{A,B}|\lesssim B, \quad |\Psi_{A,B}'|\lesssim 1,\]
by the Cauchy-Schwarz inequality, one has
\[
|K_2|\lesssim B \left(\|\rho \px v_1\|+\|\rho v_1\|\right) \|\rho^{-1}\left[\Xe,P_2\right] U_1U_0 u_1\|.
\]
Now, we claim that for any $h\in L^2$,
\begin{equation}
\label{est:commut}
\| \rho^{-1} \left[\Xe,P_2\right] U_1U_0 h\| \lesssim \varepsilon^\frac12\|\rho S_\varepsilon h\|.
\end{equation}
Observe that this estimate applied to $h=u_1$ is sufficient to prove Lemma~\ref{LE:5}.

We prove \eqref{est:commut}. Setting
\[
f=\Xe P_2 U_1U_0 h, \qquad k =S_\varepsilon h
\]
we have
\[
-\varepsilon \partial_{xx} f+f= P_2 U_1U_1 h
\]
and
\[
-\varepsilon\partial_{xx}k + k =U_1U_0 h.
\]
From the latter, we obtain
\[
-\varepsilon \partial_{xx}(P_2 k)+P_2 k+2\varepsilon \px (P_2' k)-\varepsilon P_2'' k
=P_2 U_1 U_0 h.
\]
Combining the above identities we find
\[
-\varepsilon \partial_{xx}(f-P_2k)+(f-P_2 k)=2\varepsilon \px(P_2'k)-\varepsilon P_2'' k,
\]
and so
\begin{equation*}
\left[\Xe,P_2\right] U_1U_0 h =(f-P_2 k) 
 =\varepsilon \left[2 \Xe \partial_x (P_2' k)-\Xe (P_2'' k)\right].
\end{equation*}
Thus, estimate \eqref{est:commut} follows from $P_2',P_2''\in \cY$ and \eqref{tech2}.
\end{proof}

\begin{lemma}\label{LE:6}
For a constant $C>0$, it holds
\[
|K_3|\leq \frac\mu2 \left(\|\rho\px v_1\|^2+\|\rho v_1\|^2 \right)+ C |\zz|^4 +
\frac CA \left(\|\sigma_A \px u_1\|^2+\frac{1}{A^2}\|\sigma_A u_1\|^2\right) .
\]
\end{lemma}
\begin{proof}
We use the decomposition of $N$ from \eqref{eq:decN}, so that
\begin{align*}
K_3 & = z_1^2 \int \left(\Psi_{A,B} \partial_x v_{1}+\frac{1}{2}\Psi_{A,B}' v_1\right)S_\varepsilon R_0^\perp
 +\int \left(\Psi_{A,B} \partial_x v_{1}+\frac{1}{2}\Psi_{A,B}' v_1\right)S_\varepsilon (R_1^\perp+R_2^\perp)\\
&=K_{3,1}+K_{3,2}.
\end{align*}
On the one hand, since $R_0\in\cY$, by \eqref{tech2}, we have
\[
\|\rho^{-1} S_\varepsilon R_0^\perp\|\lesssim \|\rho^{-1} U_1U_0 R_0^\perp\|\lesssim |\zz|^2.
\]
Thus, by the Cauchy-Schwarz inequality,
\[
|K_{3,1}|\lesssim z_1^2 \left(\|\rho\px v_1\|+\|\rho v_1\|\right)\|\rho^{-1} S_\varepsilon R_0^\perp\| 
\lesssim \left(\|\rho\px v_1\|+\|\rho v_1\|\right) |\zz|^2.
\]
On the other hand, we observe that
\[|R_1|+|R_2| \lesssim |u_1|^2 +|u_1| |z_1|\rho^5 + |z_1|^3 \rho^{10}
\lesssim \delta \left(|u_1|+ |z_1|^2 \rho^{10}\right),\]
and so by Lemma~\ref{LE:1},
\[
\|\sigma_A S_\varepsilon (R_1^\perp+R_2^\perp)\|
\lesssim \varepsilon^{-1} \|\sigma_A (R_1^\perp+R_2^\perp)\|
\lesssim \delta \varepsilon^{-1} \left(\|\sigma_A u_1\|+|\zz|^2\right).
\]
Thus, by the Cauchy-Schwarz inequality,
\begin{align*}
|K_{3,2}|
&\lesssim \left(\|\sigma_A\px v_1\|+\|\sigma_A v_1\|\right)\|\sigma_A S_\varepsilon (R_1^\perp+R_2^\perp)\| \\
&\lesssim \delta \varepsilon^{-1} \left(\|\rho\px v_1\|+\|\rho v_1\|\right) \left(\|\sigma_A u_1\|+|\zz|^2\right) .
\end{align*}
This completes the proof of the lemma taking $\delta$ small enough, depending on $\varepsilon$ and $A$.
\end{proof}

From the identity \eqref{virial2} and Lemmas~\ref{LE:3}, \ref{LE:4}, \ref{LE:5} and~\ref{LE:6}, it follows that for $\varepsilon$ small, $A$ large and $\delta$ small,
\[
\dot \cK \leq -\frac \mu4 \left(\|\rho \partial_x v_1\|^2 + \|\rho v_1\|^2\right)
+ C|\zz|^4 + \frac C{\sqrt{A}} \left(\|\sigma_A \px u_1\|^2+\frac{1}{A^2}\|\sigma_A u_1\|^2
+\|\rho^2 u_1\|^2\right).
\]
Thus, by integration on $(0,T)$, and using (by Lemma~\ref{LE:1})
\[|\cK|\lesssim \|\sigma_A \px v_1\|^2 + \|\sigma_A v_1\|^2 + \|\sigma_A v_2\|^2
\lesssim \varepsilon^{-2} \delta^2\lesssim A \delta^2,
\]
we obtain
\[
\int_0^T \|\rho v_1\|^2 \ud t\lesssim A \delta^2 
+ \int_0^T|\zz|^4\ud t + \frac 1{\sqrt{A}} \int_0^T\left(\|\sigma_A \px u_1\|^2+\frac{1}{A^2}\|\sigma_A u_1\|^2
+\|\rho^2 u_1\|^2\right) \udd t.
\]
The estimate of Proposition~\ref{PR:3} now follows from Propositions~\ref{PR:1} and \ref{PR:2}.
\end{proof}

We fix $\varepsilon$ as in Proposition~\ref{PR:3}.

\section{Conclusion of the proof of Theorem~\ref{TH:1}}

Combining Lemma~\ref{LE:2} and Proposition~\ref{PR:3}, we find for any $T>0$,
\begin{equation*}
\int_0^T\|\rho^2 u_1 \|^2 \ud t \lesssim \int_0^T \|\rho S_\varepsilon u_1\|^2 \ud t
\lesssim A \delta^2 + \frac 1{\sqrt{A}} \int_0^T \|\rho^2 u_1 \|^2 \ud t.
\end{equation*}
Thus, for $A$ large enough, for any $T>0$,
\[
\int_0^T\|\rho^2 u_1 \|^2 \ud t \lesssim A \delta^2 .
\]
Now, we fix such an  $A$.
Let
\[
\cM = |\zz|^4+ \|\sigma_A \partial_x u_1\|^2 + \|\sigma_A u_1\|^2 
+ \|\sigma_A u_2\|^2.
\]
By Propositions \ref{PR:1} and \ref{PR:2}, it follows that
\[
\int_0^{+\infty} \cM(t) \udd t
\lesssim \delta^2.
\]
In particular, there exists $t_n\to +\infty$ such that 
$\lim_{n\to+\infty} \cM(t_n)=0$.
We conclude the proof of Theorem~\ref{TH:1} using a standard argument.
Using \eqref{eq:zu} and \eqref{eq:z2}, and then integration by parts, we compute
\begin{align*}
\dot \cM & = 2 |\zz|^2 \frac \ud{\ud t} |\zz|^2 + 2 \int \sigma_A \left[ (\px u_1) (\px \dot u_1)
+u_1 \dot u_1 + u_2 \dot u_2 \right]\\
& = - 4 |\zz|^2 \frac{z_2}\lambda \langle N, Y\rangle + 2 \int \sigma_A \left[ (\px u_1) (\px u_2)
+u_1u_2 - u_2 (L_0 u_1) -u_2 N^\perp\right] \\
& =- 4 |\zz|^2 \frac{z_2}\lambda \langle N, Y\rangle + 2 \int \left[ - \sigma_A'(\px u_1)u_2 + \sigma_Au_1u_2 (1-W''(H)) - \sigma_A u_2 N^\perp \right]
\end{align*}
Using $|\sigma_A'|\lesssim \sigma_A$ and $|N|\lesssim u_1^2 + z_1^2 Y^2$, we obtain
\[
\big| \dot \cM \big| \lesssim \cM.
\]
Let $t\geq 0$,
integrating on $(t,t_n)$, for $n$ large, we obtain
\[
\cM(t) \lesssim \cM(t_n) + \int_t^{t_n} \cM.
\]
Taking the limit $n\to \infty$,
\[
\cM(t)\lesssim \int_t^{+\infty} \cM.
\]
It follows that $\lim_{+\infty} \cM=0$, and thus for any bounded interval $I$ of $\R$,
\[
\lim_{t\to+\infty} \left( |\zz(t)|+\|\uu(t)\|_{H^1(I)\times L^2(I)}\right) = 0.
\]
The same result for $t\to-\infty$ is obtained using the time reversibility of the equation.

\section{Proof of Corollary~\ref{TH:2}}
We denote by $\omega_\eta$ and $H_\eta$ the analogues of $\omega$ and $H$ for the potential $W_\eta$.
For $\eta_0$ small enough, it is clear that the condition \eqref{on:W} is satisfied by $W_\eta$.
We observe that the setting is identically to the one in \cite[Proof of Corollary~1]{KMMV}.
In particular, we have for any $k\in \{0,\ldots,4\}$, for any $x\in \R$,
\[
|H_\eta^{(k)}(x) - H^{(k)}(x)|\lesssim \eta_0 e^{-\frac 14 \omega |x|},\quad
|\omega_\eta-\omega|+\sup_\R|W_\eta^{(k)}(H_\eta)-W^{(k)}(H)|\lesssim \eta_0.
\]
(In this proof, all implicit constants are independent of $\eta_0$.)
It follows from Hypothesis~1 for $W$ and standard perturbation arguments that
the operator $L_{0,\eta}=-\partial_{xx}+W''_\eta(H_\eta)$ also has a second eigenvalue $\lambda_\eta\in (\frac 12\omega_\eta,\omega_\eta)$ (see Remark~\ref{RK:1}) associated to an odd eigenfunction~$Y_\eta$, which means that Hypothesis 1 is satisfied for $W_\eta$. Moreover, it holds $|\lambda_\eta-\lambda|\lesssim \eta_0$.
We denote by $P_{1,\eta}$ and $P_{2,\eta}$ the analogues of $P_1$ and $P_2$ for the potential $W_\eta$. By standard ODE arguments, one also obtains the following estimates, for any $k=0,1,2,3$, $x\in \R$,
\[
|Y_\eta^{(k)}(x)|+
|P_{1,\eta}'(x)|+|P_{2,\eta}'(x)|\lesssim e^{-\kappa |x|}.
\]
Moreover, for any $k=0,1,2,3$, $k'=0,1$,
\[
\sup_\R|Y_\eta^{(k)}-Y^{(k)}|+
\sup_\R|P_{1,\eta}^{(k')}-P_1^{(k')}|+\sup_\R|P_{2,\eta}^{(k')}-P_2^{(k')}|\lesssim \eta_0.
\]
Concerning Hypothesis~2, we define $g$ and $g_\eta$ as the respective solutions of 
$L_0 g = 4 \lambda^2 g$ and
$L_{0,\eta} g_\eta = 4 \lambda_\eta^2 g_\eta$
with $g(0)=g_\eta(0)=0$ and $g'(0)=g_\eta'(0)=1$. Since $4 \lambda^2 > \omega^2$ and $4 \lambda_\eta^2 > \omega_\eta^2$, the functions $g$ and $g_\eta$ are bounded. Moreover, by a standard perturbation argument, on any compact interval $I$ of $\R$, one has
$|g_\eta-g|\lesssim C_I\eta_0$. Thus, Hypothesis~2 for $W_\eta$ holds
for $\eta_0$ small enough. Last, by Hypothesis~3 for the potential $W$, there exists a constant $\gamma>0$ such that the 
operator $-(1-\gamma) \partial_{xx} + \frac 12 x P_2'$ has at most one negative eigenvalue.
This means that for any odd function $u\in L^2$,
\[
(1-\gamma)\int (u')^2 \geq \frac 12 \int xP_2' u^2.
\]
Now, by the estimates on $P_{2,\eta}'$ and a standard argument (see the proof of Lemma~\ref{LE:3}), for
$\eta_0$ small enough, we have, for any odd function $u\in L^2$,
\[
\gamma \int (u')^2 \geq \int |xP_2'-x P_{2,\eta}'| u^2.
\]
Thus, for any odd function $u\in L^2$,
\[
\left(1-\frac \gamma2\right)\int (u')^2 \geq \frac 12 \int xP_{2,\eta}' u^2,
\]
which means that Hypothesis 3 holds for $W_\eta$.

\end{document}